\documentclass[a4wide,10pt,reqno]{article}
\usepackage{amsfonts}
\usepackage{amsmath}
\usepackage{amsthm}
\usepackage{amssymb}
\usepackage{lipsum}
\usepackage{mathrsfs}
\usepackage{hyperref}
\usepackage{enumerate}
\usepackage{array}
\usepackage{tabularx}
\usepackage{float}
\restylefloat{table}
\usepackage{amssymb,tikz}
\usepackage{caption}
\usepackage{subcaption}
\usepackage{amsmath}
\usepackage{amsthm}
\usepackage{amsbsy}
\usepackage{psfrag}
\usepackage{pstricks}
\usepackage{xcolor}
\usepackage{graphics}
\usepackage{graphicx}
\usepackage{epstopdf,bm}
\usepackage{fancyhdr}
\numberwithin{equation}{section}

\usepackage{graphicx}
\usepackage{enumitem}

\newcommand\norm[1]{\left\lVert#1\right\rVert}

\theoremstyle{plain}
\newtheorem{theorem}{Theorem}[section]
\newtheorem{defi}[theorem]{Definition}
\newtheorem{remark}[theorem]{Remark}

\newtheorem{lemma}[theorem]{Lemma}

\def\O{\Omega}

\def\cV{\mathcal{V}}

\def\cT{\mathcal{T}}
\def\cE{\mathcal{E}}

\def\mean#1{\left\{\hskip -5pt\left\{#1\right\}\hskip -5pt\right\}}
\def\jump#1{\left[\hskip -3.5pt\left[#1\right]\hskip -3.5pt\right]}
\def\smean#1{\{\hskip -3pt\{#1\}\hskip -3pt\}}
\def\sjump#1{[\hskip -1.5pt[#1]\hskip -1.5pt]}

\begin{document}
	\title{Crouzeix-Raviart Finite Element Approximation of Dirichlet Boundary Control Problems with Piecewise Constant Controls}
	\author{ Sudipto Chowdhury, Shallu \thanks{ Department of Mathematics, The LNM Institute of Information Technology, Jaipur, Rajasthan- 302031, India (\tt{sudipto.choudhary@lnmiit.ac.in, 22pmt@lnmiit.ac.in})}} 
    \maketitle
	\begin{abstract}
		\noindent
		This article examines the Dirichlet boundary control problem governed by the Poisson equation, where the control variables are square integrable functions defined on the boundary of a two-dimensional bounded, convex, polygonal domain. It employs an ultra-weak formulation and utilizes Crouzeix-Raviart finite elements to discretize the state variable, while employing piecewise constants for the control variable discretization. Furthermore, it establishes an optimal order \emph{\emph{a priori}} error estimate for the control variable. 
	\end{abstract}
	\par
	\noindent
	{\small{\bf Keywords:}}
	Optimal control problem, Crouziex-Raviart, Finite element method, \emph{a priori} error analysis, Control constraints \\
	\noindent
	{\small{\bf AMS subject classification:} 65N30, 65N15, 65N12, 65K10 }
	\everymath{\displaystyle}
    
\section{Introduction}

We consider the control-constrained Dirichlet boundary control problem on a convex and bounded polygonal domain $\O\subseteq \mathbb{R}^2$. Denote the polygonal boundary of the domain $\O$ by $\Gamma:=\cup_{j=1}^{k}\Gamma_j$, where $\Gamma_j$ is a straight line segment for all $1\leq j\leq k$. The minimization problem seeks a pair $(\bar{y},\bar{u})\in L_2(\O)\times U_{ad}$ such that
\begin{align}\label{Costfnl}
J(\bar{y},\bar{u})=\min_{(y,u)\in L_2(\O)\times U_{ad}} J(y,u),
\end{align}
subject to the following second-order elliptic equation:
\begin{align}\label{eqn:state}
\begin{cases}
-\Delta y&=0 \quad \text{in} \quad \O,\\
\hspace{.4cm}y&=u \quad \text{on} \quad \Gamma.
\end{cases}
\end{align}
$U_{ad} $ is a convex subset of $L_2(\Gamma)$ representing the set of admissible controls, which will be defined later. The quadratic cost functional 
 $J(\cdot,\cdot):L_2(\O)\times L_2(\Gamma)\rightarrow \mathbb{R}$ is defined in the following manner:
\begin{align}\label{cost:functional}
J(y,u):=\frac{1}{2}\int_{\O}|y-y_d|^2\,dx+\frac{\alpha}{2}\int_{\Gamma}|u|^2\,ds,
\end{align}
where $y_d\in L_2(\O)$ is the given desired state and $\alpha>0$ is known parameter which is introduced for the regularization. 
\noindent
The state equation \eqref{eqn:state} under the standard weak formulation is not well posed for $u\in L_2(\Gamma)$ as the weak solution of \eqref{eqn:state} lie in $H^1(\O)$ space and therefore by trace theorem \cite{brennerbook}, the first trace of it must lie in $H^{1/2}(\Gamma)$ which is not the case in this formulation. There are multiple techniques available in the literature to overcome this variational difficulty, one of which is to use the method of transposition which is also known as the ultra weak formulation \cite{casasraymond:2006,may:2013}. In brevity, we discuss the method of transposition. Define a test function space as follows:
\begin{align*}
X:=\{\phi \in H^1_0(\O):\Delta \phi \in L_2(\O)\}.
\end{align*}
Now, we obtain the ultra weak formulation of the state equation \eqref{eqn:state}. Multiplying $\phi\in X$ in both sides of \eqref{eqn:state} and then integrate both the sides to find 
\begin{align*}
-\int_{\O}\Delta y\,\phi\,dx&=0.
\end{align*}
A use of integration by parts in the above equation yields 
\begin{align*}
0&=\int_{\O}\nabla y\cdot\nabla \phi\,dx-\int_{\Gamma}\phi\,\frac{\partial y}{\partial n}\,ds\\
&=\int_{\O}\nabla y\cdot\nabla \phi\,dx,
\end{align*}
where we have used the fact that $\phi \in H^1_0(\O)$ to get the last equality. The idea in the method of transposition is to apply the integration by parts once more. By doing so, we get
\begin{align}\label{harmonic:extn}
0&=-\int_{\O}y\,\Delta \phi\,dx+\int_{\Gamma}y\,\frac{\partial \phi}{\partial n}\,ds.
\end{align}
Thus, the ultra weak formulation of \eqref{eqn:state} can be stated as: Given $u\in L_2(\Gamma)$, find $y\in L_2(\O)$ such that
\begin{align}\label{ultra:weak}
(y,\Delta \phi)&=\langle u,\partial \phi/\partial n\rangle \quad \forall ~ \phi \in X,
\end{align}
where $(\cdot,\cdot)$ and $\langle \cdot,\cdot\rangle$ denotes the $L_2$ inner product on the domain $\O$ and its boundary $\Gamma$ respectively. Another approach to study second order Dirichlet boundary control problem is to 
transform the Dirichlet boundary condition into Robin's type boundary condition \cite{casasmateosraymond:2009}. Other method is to employ the energy space based approach in which the control is penalized in the energy space $H^{1/2}(\Gamma)$ \cite{chowdhurykkt,phan:2015}.  In this article the control variable is discretized with piecewise constant finite elements while the state and adjoint states are discretized using first-order non conforming Crouzeix Raviart finite elements.\\ 
Finite element analysis for the Dirichlet boundary control problems addressed so far in the literature uses polynomials of same degrees to discretize the state and the control variables. In this article for the first time, to the best of our knowledge the control variables are discretized with piecewise constants whereas the first-order Crouzeix Raviart finite element functions are employed to discretize the state and adjoint state variables. Therefore the main challenge is to connect the discrete state and  the control variables. But here we make an important observation that CR elements are defined by degrees of freedom at the midpoints of edges. This aligns perfectly with piecewise constant controls on the boundary, as the constant control value interacts directly with the single midpoint value of the CR element described by \eqref{dse}. This is one of the main novelties of this article. This formulation also avoids complex integration of varying polynomial profiles. The optimal control
is found by projecting the normal derivative of the adjoint state onto a set of admissible controls. When the control
is discretized as piecewise constant, this projection becomes purely local and element-wise. Theorem \ref{estimate_for_boundary-extension} derives an estimate for the discrete boundary lifting of a piecewise constant function in $L_1$ norm  which reveals one important behaviour of the boundary lifting namely it's contribution concentrates near the boundary of the domain and not much in the interior of the domain. Also it's contribution becomes smaller proportional to the smallness of the discretization parameter. Theorem \ref{L2_norm_error_state_eq}, Remark \ref{rem1} discuss the $L_2$ norm error estimate for the CR discretization of the harmonic lifting of a piecewise constant function and establishes an almost linear rate of convergence. Theorem \ref{L2_norm_error_state_eq30} and Remark \ref{rem2} discuss the $L_2$ norm error estimate for the CR discretization of the harmonic lifting of a $H^{\frac{1}{2}}(\Gamma)$ function and establishes a linear order of convergence. These results are obviously super convergence results and posses  the theoretical importance on it's own in the theory of the CR finite element analysis. The points discussed above constitutes some of the key novelties of this article.\\
The outline of this article is as follows. In section 2 \eqref{Costfnl}-\eqref{eqn:state} is formulated in terms of the ultraweak formulation, the first-order optimality condition is derived and the regularity of the optimal control is derived. Section 3 contains the preliminaries of the first-order Cruzeix Raviart finite element discretization and the control space discretization. The discrete control problem is introduced in this section and the discrete optimality system is derived. This section contains an important representation of the discrete adjoint state variable in terms of the discrete control to state operator and it's adjoint. Section 4 contains an important $L_1$ norm estimate for the discrete boundary lifting of a piecewise constant function defined on the boundary of the domain and also contains some important auxiliary results regarding the $L_2$ norm error estimates of the harmonic lifting of a piecewise constant and $H^{\frac{1}{2}}(\Gamma)$ function. An almost optimal order a priori error estimate for the optimal control with respect to the $L_2$-norm is derived in Section 5, which is the main result of this article. Section 6 contains the numerical illustrations which justifies our theoretical findings.


\section{Continuous Problem}
In this segment, we discuss the variational setting of the investigated problem and derive the first order necessary optimality conditions. Before, we enter into the analysis part, we define the notations which are used throughout the article. The notations $H^m(\O)$, $H_0^m(\O)$, and $W^{m,p}(\O)$ represent the standard Sobolev spaces on $\O$, where $m$ is a non negative integer.  The Sobolev spaces of non integer order $s$, $H^s(\O)$ and $W^{s,p}(\O)$, are defined using interpolation, as explained in \cite{brennerbook}. On a Lipschitz domain, this definition is equivalent to the definition that uses double-integral norms, which is discussed in \cite[Theorem 14.2.3]{brennerbook}. The norms of $H^s(\Gamma)$ and $W^{s,p}(\Gamma)$, $0 \leq s \leq 1$ and $1 < p < \infty$, are defined on the boundary $\Gamma$ using charts, which is equivalent to using double-integral norms on $\Gamma$, according to \cite{brennerbook,Nezaetal}. The outwards unit normal vector to $\Gamma$ is denoted by $n$. The dual space of $H^{1/2}(\Gamma)$ is denoted by $H^{-1/2}(\Gamma)$ equipped with the operator norm $\|\cdot\|_{H^{-1/2}(\Gamma)}$ defined as:
\begin{align*}
\|\mu\|_{H^{-1/2}(\Gamma)}:=\sup\limits_{v\in H^{1/2}(\Gamma), v\neq 0}\frac{\langle \mu,v\rangle}{\|v\|_{H^{1/2}(\Gamma)}}, \quad \mu \in H^{-1/2}(\Gamma).
\end{align*}
Any generic member of $\mathbb{R}^2$ is denoted by $x=(x_1,x_2)$, where $x_1$, $x_2\in \mathbb{R}$. In the article, $C$ denotes a positive constant, which is generic in nature and does not depend on the solutions or the mesh-size.
\begin{remark}
When $v$ belongs to $H^2(\O)$, the trace of its gradient, denoted by $\nabla v|_{\Gamma}$, lies in $H^{1/2}(\Gamma)^2$. If $\Gamma$ has a smooth boundary, the outwards unit normal vector $n$ is continuous, allowing the definition of the normal derivative $\partial_n v = n\cdot\nabla v$ to be well defined. Thus $\partial_n v \in H^{1/2}(\Gamma)$ for $v \in H^2(\O)$ and the following estimate holds:
\begin{align*}
\|\partial _n v\|_{H^{1/2}(\Gamma)}\leq C\|v\|_{H^2(\O)}, \quad v\in H^2(\O).
\end{align*}
However, this estimate is not applicable for polygonal boundaries $\Gamma$, which are only Lipschitz continuous. Nonetheless, for $v \in H^2(\O)$, we still have $\partial_n v|_{\Gamma_j} \in H^{1/2}(\Gamma_j)$ for each straight component $\Gamma_j$, $1\leq j \leq k$ of $\Gamma$. To accommodate this, we introduce the space $\tilde{H}^{1/2}(\Gamma):=\{v \in L_2(\Gamma), v|_{\Gamma_j} \in H^{1/2}(\Gamma_j) \quad \forall~ 1\leq j \leq m\}$. We denote by $\tilde{H}^{-1/2}(\Gamma)$ the completion of $L_2(\Gamma)$ with respect to the following dual norm on $L_2(\Gamma)$
\begin{align*}
\|\mu\|_{\tilde{H}^{-1/2}(\Gamma)}:=\sup\limits_{v\in X\setminus\{0\}} \frac{\langle \mu,\partial_n v\rangle}{\|v\|_{H^2(\O)}}\leq \sup_{v\in \tilde{H}^{1/2}(\Gamma),v\neq 0}\frac{\langle \mu,v\rangle}{\|v\|_{\tilde{H}^{1/2}(\Gamma)}}.
\end{align*}
Note that $\tilde{H}^{-1/2}(\Gamma)$ is not in general the dual space of $\tilde{H}^{1/2}(\Gamma)$. In the case of a smooth boundary $\Gamma$, 
the mapping $\partial_n: X \rightarrow H^{1/2}(\Gamma)$ is surjective, so $\tilde{H}^{-1/2}(\Gamma)= H^{-1/2}(\Gamma)$.
\end{remark}
\noindent
For any $u\in \tilde{H}^{-1/2}(\Gamma)$, the harmonic lifting of $u$ is given  by $y\in L_2(\O)$, the solution of \eqref{harmonic:extn}.  
The function $u\in \tilde{H}^{-1/2}(\Gamma)$ is known as the ultra weak trace of $y_u\in L_2(\O)$ and the following trace estimate holds true:
\begin{align*}
\|u\|_{\tilde{H}^{-1/2}(\Gamma)}=\sup\limits_{\phi\in X\setminus\{0\}} \frac{(y_u,\Delta \phi)}{\|\phi\|_{H^2(\O)}}\lesssim \|y_u\|_{L_2(\O)}.
\end{align*}
Furthermore, if $u\in H^{1/2}(\Gamma)$, then $y_u\in H^1(\O)$ becomes the standard harmonic lifting of $u$ such that it satisfies the following equation:
\begin{align*}
(\nabla y_u,\nabla \phi)=0 \quad \forall ~ \phi \in V.
\end{align*}
In this case $u\in H^{1/2}(\Gamma)$ is simply the trace of $y_u$ in the sense that $y_u|_{\Gamma}=u$.
\subsection{Optimality System}
In this section, we write the model Dirichlet boundary optimal control problem and derive the corresponding optimality system by applying the first order necessary optimality conditions. For $u\in  L_2(\Gamma)$, define $y\in L_2(\O)$ to be the solutions of the following equations:
\begin{align}
(y,\Delta\phi)&=\langle u, \partial_n \phi\rangle \quad \forall ~ \phi \in X.\label{bdry:eqn}
\end{align}
 If the given data satisfy $u\in H^{1/2}(\Gamma)$, then we can recover the standard weak formulation, i.e. the following holds:
\begin{align}\label{eqn:weakstate}
a(y,v)=0\quad \forall ~v\in H^1_0(\O),
\end{align}
where $a(\cdot,\cdot):H^1(\O)\times H^1(\O)\rightarrow \mathbb{R}$ is a symmetric bilinear form given by:
\begin{align*}
a(v,w)&=\int_{\O}\nabla v\cdot \nabla w\,dx \quad \forall ~ v, w\in H^1(\O).
\end{align*}
The bilinear form $a(\cdot,\cdot)$ defines a continuous mapping on $H^1(\O)$ and coercive map on $H^1_0(\O)$. Set $V=H^1_0(\O)$ and $Q=H^1(\O)$. 
The subsequent lemma affirms the well-posed-ness of the boundary value problem \eqref{eqn:state} in its ultra weak form, and as a particular instance, it ensures the presence of the ultra weak harmonic lifting of the general boundary data $u \in \tilde{H}^{-1/2}(\Gamma)$.
\begin{lemma}\label{lem:existence}
For any given $u \in \tilde{H}^{-1/2}(\Gamma)$, the state equation in its ultra weak form \eqref{ultra:weak} has a unique solution $y\in L_2(\O)$ such that the following \emph{\emph{a priori}} estimate is satisfied:
\begin{align}\label{est:priori}
\|y\|_{L_2(\O)}\leq C\|u\|_{\tilde{H}^{-1/2}(\Gamma)},
\end{align}
where $H^{-2}(\O)$ is the dual space of $X$.
\begin{proof}
For $u \in H^{1/2}(\Gamma)$, there exists a unique weak solution $y\in H^1(\O)$ of \eqref{eqn:state} satisfying \eqref{eqn:weakstate}. An application of integration by parts in \eqref{eqn:weakstate} yields
\begin{align}\label{1.1}
-(y,\Delta \phi)&=-\langle u,\partial_n\phi\rangle \quad \forall ~ \phi \in X,
\end{align}
which implies that $y$ is the solution of \eqref{ultra:weak}. We apply duality argument to prove the \emph{\emph{a priori}} error estimate \eqref{est:priori}. Consider $z\in V$ to be the weak solution of the following Poisson problem:
\begin{align*}
\begin{cases}
-\Delta z&=y \quad \text{in} \quad \O,\\
\hspace{.4cm}z&=0 \quad \text{on} \quad \Gamma.
\end{cases}
\end{align*}
By elliptic regularity theory, $z\in H^2(\O)$ and $\|z\|_{H^2(\O)}\leq C \|y\|_{L_2(\O)}$. Thus, using \eqref{1.1} for $\phi=z$, we get
\begin{align*}
\|y\|^2_{L_2(\O)}&=(y,-\Delta z)\\
&=-\langle u,\partial _n z\rangle,
\end{align*}
which on using Cauchy-Schwarz inequality yields \eqref{est:priori}. Since and $H^{1/2}(\Gamma)\subseteq \tilde{H}^{-1/2}(\Gamma)$ are dense, by density arguments, for $u\in \tilde{H}^{-1/2}(\Gamma)$  the equation \eqref{ultra:weak} possesses a unique solution $y\in L_2(\O)$ satisfying the \emph{\emph{a priori}} error estimate \eqref{est:priori}.
\end{proof}
\end{lemma}
\noindent
In light of the Lemma \ref{lem:existence}, we define the control to state map which maps the given data to the unique solution of \eqref{ultra:weak}.
\begin{defi}\label{control:to:state}
For $u\in  L_2(\Gamma)$, the solution map $S:L_2(\Gamma)\rightarrow L_2(\O)$ is defined as $Su=y$ such that $y$ satisfy \eqref{bdry:eqn}.
\end{defi}
\noindent
Now, we define the set of admissible controls $U_{ad}$ as follows: 
\begin{align*}
U_{ad}:=\{u\in L_2(\Gamma):u_a\leq u(x)\leq u_b ~~ \text{a.e} ~~ x\in \Gamma\},
\end{align*}
where $u_a$, $u_b$ $\in \mathbb{R}$ are such that $u_a<0<u_b$. The model Dirichlet boundary optimal control problem reads as: find $(\bar{y},\bar{u})\in L_2(\O)\times U_{ad}$ such that
\begin{align}\label{mocp}
J(\bar{y},\bar{u})=\min_{(y,u)\in L_2(\O)\times U_{ad}}J(y,u),
\end{align}
subject to the constraint $y=Su$, where $J(\cdot,\cdot)$ is the quadratic cost functional defined by \eqref{cost:functional}. Next, we present the following result that addresses the well-posedness of the underlying optimal control problem \eqref{mocp} and the corresponding first-order optimality system.
\begin{theorem}
The optimal control problem \eqref{mocp} possess a unique solution $({y},\bar{u})\in L_2(\O)\times U_{ad}$. Moreover the Gelfund triplet $({y}, p,\bar{u})$ solves the following first order necessary optimality conditions:
\begin{align}
&(y_{\bar{u}},\Delta \phi)=\langle \bar{u}, \partial_n \phi\rangle \quad \forall ~ \phi \in X,\label{ctskkt:3}\\
&(\nabla v,\nabla p)=({y}-y_{d},v)\quad \forall~ v\in V,\label{As1}\\
&(\alpha\bar{u}-\dfrac{\partial p}{\partial n},u-\bar{u})\geq 0\quad \forall ~ u \in U_{ad}.\label{ctskkt:4}
\end{align}
\begin{proof}
By utilizing the solution operator $S(\cdot,\cdot)$, we can reformulate the model optimal control problem \eqref{mocp} into a simplified form, expressed as follows:
\begin{align}\label{red:mocp}
\min_{u\in U_{ad}}j(u),
\end{align}
where $j(\cdot):L_2(\Gamma)\rightarrow \mathbb{R}$ represents the reduced cost functional defined as:
\begin{align*}
j(u):=\frac{1}{2}\|Su-y_d\|^2_{L_2(\O)}+\frac{\alpha}{2}\|u\|^2_{L_2(\Gamma)}.
\end{align*}
The reduced cost functional $j(\cdot)$ defines a strictly convex functional on the Hilbert space $L_2(\Gamma)$. The set of admissible controls $U_{ad}$ is a nonempty, closed, convex subset of $L_2(\Gamma)$, thus the standard theory of PDE constrained optimal control problem \cite{trolzbook} implies that problem \eqref{red:mocp} has a unique solution say $\bar{u}\in U_{ad}$. Setting $\bar{y} = S(f, \bar{u})$ as the corresponding state variable. By definition of $S(\cdot,\cdot)$, we find that $\bar{y}\in L_2(\O)$ satisfies \eqref{ctskkt:3}. By applying the first order necessary optimality conditions, we get for any $u\in U_{ad}$
\begin{align}\label{cost_fnl_vi}
0\leq  j'(\bar{u})(u-\bar{u})&:=\lim_{t\downarrow 0}\frac{j(\bar{u}+t(u-\bar{u}))-j(\bar{u})}{t}\\
&=({y}-y_d,Su-S{\bar{u}})+\alpha\langle\bar{u},u-\bar{u}\rangle,
\end{align}
Introduce an adjoint state $p\in V$ such that it satisfies \eqref{As1} 
Using integration by parts and density arguments in \eqref{As1}, we get
\begin{align}\label{As2}
(v,\Delta p)&=-({y}-y_d,v)\quad \forall~ v\in L_2(\O).
\end{align}
Since $p\in H^2(\O)$, it implies $\partial _n p \in \tilde{H}^{1/2}(\Gamma)$. 
Choosing $v=Su-S\bar{u}$ in \eqref{As2} and applying integration by parts, the trace of $Su-S\bar{u}$ on $\Gamma$ implies
\begin{align*}
(\bar{y}-y_d,Su-S{\bar{u}})&=-(Su-S{\bar{u}},\Delta p)\\
&=-\langle u-\bar{u},\dfrac{\partial p}{\partial n}\rangle,
\end{align*}
then \eqref{cost_fnl_vi} implies $\bar{u}\in U_{ad}$ solves \eqref{ctskkt:4}. This completes the proof.
\end{proof}
\end{theorem}
\subsection{Regularity of the Optimal Control} In this fragment, we obtain the regularity of the optimal variables namely $\bar{y}$ and $\bar{u}$ which are useful for the subsequent analysis. 
Then $p\in X$ and by elliptic regularity theory for convex polygonal domains \cite{grisvardbook},   it satisfies the following \emph{\emph{a priori}} estimate:
\begin{align*}
\|p\|_{H^2(\O)}&\leq C \|\bar{y}-y_d\|_{L_2(\O)}.
\end{align*}
For any $u\in U_{ad}$, using \eqref{As2} for $v=y_u-y_{\bar{u}}\in L_2(\O)$, we get 
where we have used \eqref{harmonic:extn} to observe the last equality. Hence, the inequality \eqref{ctskkt:4} reduces to 
\begin{align*}
\alpha \langle \bar{u},u-\bar{u}\rangle-\langle u-\bar{u},\dfrac{\partial p}{\partial n}\rangle&\geq 0 \quad \forall ~ u\in U_{ad},
\end{align*}
which implies
\begin{align}\label{ctskkt:5}
\langle \alpha \bar{u}-\partial _n p,u-\bar{u}\rangle \geq 0 \quad \forall ~ u\in U_{ad}.
\end{align}
It follows from \eqref{ctskkt:5} and standard arguments from \cite[Chapter 2]{trolzbook} that
\begin{align}\label{ctskkt:6}
\bar{u}(x)=P_{[u_a, u_b]}\Big(\dfrac{1}{\alpha}\dfrac{\partial p}{\partial n} (x)\Big),\quad \text{for almost every} ~ x\in \Gamma,
\end{align}
where $P_{[u_a, u_b]}(w):=\min\{u_b,\max\{u_a,w\}\}$ denotes the projection of $\mathbb{R}$ onto $[u_a,u_b]$. Since $\Omega$ is convex, therefore it follows from \eqref{ctskkt:6} that $\bar{u}\in \tilde{H}^{1/2}(\Gamma)$. As $p$ equals zero on $\Gamma$, the derivative of $\theta$ in the tangential direction also equals zero on $\Gamma$. Therefore, at the corner points of the domain denoted as $x$, we have $\dfrac{\partial p}{\partial n}(x)=0$. Hence, it follows from \eqref{ctskkt:6} that $\bar{u}(x)=0$ at all the corner points $x$ of the domain, which implies that $\bar{u}\in H^{1/2} (\Gamma)$ from the double integral definition of the fractional derivative of $H^{1/2} (\Gamma)$.

\section{Discretization }
This section is devoted to the discretization.
\subsection{Preliminaries}
We introduce the following notations which are used throughout the article.
\begin{itemize}
\item $\cT_h$ is a regular triangulation of $\O$ into closed triangles $T$.
\item $h_T:=$ diam ($T$) is the diameter of the triangle $T\in \cT_h$.
\item $h:=\max_{T\in \cT_h}h_T$ is the mesh parameter.
\item $\cV_h^i:=$ the set of all interior vertices of $\cT_h$.
\item $\cV_h^b:=$ the set of all boundary vertices of $\cT_h$.
\item $\cV_h:=\cV_h^i\cup \cV_h^b$ is the set of all vertices of $\cT_h$.
\item $\cE_h^i:=$ the set of all interior edges of $\cT_h$.
\item $\cE_h^b:=$ the set of all boundary edges of $\cT_h$.
\item $\cE_h:=\cE_h^i\cup \cE_h^b$ is the set of all edges of $\cT_h$.
\item $m_e:=$ mid point of $e\in \cE_h$.
\item $\mathbb{P}_k(T):=$ the set of all polynomials of degree atmost $k\in \mathbb{N}\cup \{0\}$ over $T\in \cT_h$.
\item $H^k(\O,\cT_h):=\{v\in L_2(\O):v|_{T}\in H^k(T) \quad \forall~ T\in \cT_h\}$, $k\in \mathbb{N}\cup \{0\}$.
\end{itemize} 
From this point forward, the notation $a\lesssim b$ indicates the existence of a positive constant $C$ that is not dependent on the mesh parameter $h$, such that a $a\leq Cb$. It is assumed that the triangulation $\cT_h$ is regular, meaning that any two distinct simplices in $\cT_h$ with non-empty intersection are either identical, or share exactly one common vertex, or one common edge. Additionally, every interior angle of any simplex is bounded from below by a universal positive constant $\rho$. All the generic constants hidden in the notation $\lesssim$ are solely determined by $\rho>0$, thereby ensuring the triangulation is shape regular.\\
Next, we will establish the definitions for the jump and average of functions that are scalar-valued and vector-valued. Consider an edge $e \in \cE_h^i$ that is shared by two adjacent triangles $T_{+}$ and $T_{-}$, such that $e = T_{+}\cap T_{-}$. Let $n_{+}$ be the unit normal of $e$ pointing from $T_{+}$ to $T_{-}$, and $n_{-}= -n_{+}$. In this context, we define the jumps $\sjump{\cdot}$ and averages $\smean{\cdot}$ across the edge $e$ as follows:
\begin{itemize}
\item For a scalar valued function $w\in H^1(\O,\cT_h)$, define
\begin{align*}
\sjump{w}:=w_{+}\,n_{+}+w_{-}\,n_{-}, \quad \quad \smean{w}:=\frac{w_{+}+w_{-}}{2},
\end{align*}
where $w_{\pm}=w|_{T_{\pm}}$.
\item For a vector valued function $v\in [H^1(\O,\cT_h)]^2$, define
\begin{align*}
\sjump{v}:=v_{+}\cdot n_{+}+v_{-}\cdot n_{-}, \quad \quad \smean{v}:=\frac{v_{+}+v_{-}}{2},
\end{align*}
where $v_{\pm}=v|_{T_{\pm}}$.
\end{itemize}
To simplify the notation, we also introduce the concepts of jump and mean on the boundary $\Gamma$. Consider any edge $e \in \cE_h^b$, where it is evident that there exists a triangle $T \in \cT_h$ such that $e = \partial T \cap \Gamma$. Let $n_e$ be the unit normal of $e$ pointing outward from $T$. For any $w \in H^1(\O,\cT_h)$ and any $v \in [H^1(\O,\cT_h)]^2$, we define the following on $e \in \cE_h^b$:
\begin{align*}
\sjump{w}&:=w\,n_e \quad \text{and} \quad \smean{w}=w,\\
\sjump{v}&:=v\cdot n_e \quad \text{and} \quad \smean{v}=v.
\end{align*}
In order to approximate the state variables we define the Crouziex-Raviart finite element spaces. 
\begin{align*}
V_h&:=\{v_h\in L_2(\O):v_h|_{T}\in \mathbb{P}_1(T) ~ \forall ~ T\in \cT_h ~ \text{and} ~ v_h ~\text{is continuous at}~ m_e ~\forall ~e\in \cE_h \},\\
V_h^0&:=\{v_h\in V_h: v_h(m_e)=0 ~\forall ~ e\in \cE_h^b \}.
\end{align*}
The discrete bilinear form $a_{pw}(\cdot,\cdot):V_h\times V_h\rightarrow \mathbb{R}$ is defined by
\begin{align*}
a_{pw}(v_h,w_h)&=\sum_{T\in \cT_h}\int_{T}\nabla v_h|_T\cdot\nabla w_h|_T\,dx \quad \forall~ v_h,w_h\in V_h. 
\end{align*}
The mesh dependent bilinear form $a_{pw}(\cdot,\cdot)$ defines a symmetric bilinear form which coincides with the continuous bilinear form  $a(\cdot,\cdot)$ on $ H^1(\O)\times H^1(\O)$. Now, we define the mesh dependent norm on $V_h$. For any $v_h\in V_h$, define $\|\cdot\|_h$ as follows:
\begin{align*}
\|v_h\|_{h}:=\sqrt{a_{pw}(v_h,v_h)}.
\end{align*}
Note that, $\|\cdot\|_h$ defines only a semi-norm on $V_h$ but a norm on $V_h^0$. Next, we define a Crouziex-Raviart interpolation map $I_{CR}:V\cap C(\bar{\O})\rightarrow V_h^0$ as follows:
\begin{align}\label{CR;inter}
\int_{e}I_{CR}v\,ds=\int_{e}v\,ds\quad \forall ~ e\in \cE_h, ~ v\in V\cap C(\bar{\O}).
\end{align}
By density arguments, $I_{CR}$ can be extended continuously and uniquely to $V$, and the extension is again denoted by $I_{CR}$ for the ease of presentation. In the next lemma, we state the approximation properties of $I_{CR}$ \cite{CRetal}.
\begin{lemma}\label{lem:CRinterest}
For $v\in V$, it holds that:
\begin{align*}
\|v-I_{CR}v\|_{L_2(\O)}+h\|v-I_{CR}v\|_{h}&\lesssim h|v|_{H^1(\O)}.
\end{align*}
\end{lemma}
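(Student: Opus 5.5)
The estimate is local: it follows from a Bramble--Hilbert type argument on each triangle, combined with a density argument to pass from $V\cap C(\bar{\O})$ to all of $V$.

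\textbf{Step 1: the interpolant is the identity on $\mathbb{P}_1(T)$.} Fix $T\in\cT_h$. By \eqref{CR;inter}, $(I_{CR}v)|_T\in\mathbb{P}_1(T)$ is determined by the three edge means $\frac{1}{|e|}\int_e v\,ds$ over the edges $e\subset\partial T$. For an affine function, the value at the midpoint equals its edge mean, so $I_{CR}$ reproduces $\mathbb{P}_1(T)$.

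\textbf{Step 2: the interpolant preserves elementwise gradient means.} For every edge $e\subset\partial T$ we have $\int_e (v-I_{CR}v)\,ds=0$. Integrating by parts on $T$ gives
\begin{align*}
\int_T \nabla(v-I_{CR}v)\,dx=\sum_{e\subset\partial T}\int_e (v-I_{CR}v)\,n_e\,ds=0 .
\end{align*}
Since $\nabla I_{CR}v|_T$ is constant, it equals the mean $\frac{1}{|T|}\int_T\nabla v\,dx$, which is the $L_2(T)$-projection of $\nabla v$ onto constants. Hence
\begin{align*}
\|\nabla(v-I_{CR}v)\|_{L_2(T)}\le \|\nabla v\|_{L_2(T)},
\end{align*}
and summing over $T\in\cT_h$ gives $\|v-I_{CR}v\|_h\le |v|_{H^1(\O)}$ with constant one.

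\textbf{Step 3: the $L_2$ bound.} On each $T$, $w:=v-I_{CR}v\in H^1(T)$ has zero mean on every edge of $T$. A Poincar\'e--Friedrichs inequality for functions with vanishing mean on one edge holds on the reference element. Transferring it by the affine map and using shape regularity (the minimum angle $\rho$) gives
\begin{align*}
\|w\|_{L_2(T)}\lesssim h_T\|\nabla w\|_{L_2(T)}.
\end{align*}
Combining this with Step 2 and summing gives $\|v-I_{CR}v\|_{L_2(\O)}\lesssim h|v|_{H^1(\O)}$. Adding the two bounds yields the claim.

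\textbf{Step 4: extension to all of $V$.} The estimate now holds for $v\in V\cap C(\bar{\O})$, which is dense in $V$. Edge means are continuous on $H^1(T)$ by the trace theorem, so the estimate passes to the continuous extension of $I_{CR}$ on $V$. This also shows that the extension is well defined.

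\textbf{Main obstacle.} The only nontrivial step is the scaled Poincar\'e inequality with an $h$-independent constant. I would prove it on the reference triangle by compactness: a function with vanishing gradient is constant, and vanishing edge mean then forces it to be zero. For the scaling, note that edge means transform consistently under the affine map and that $\|\nabla\cdot\|$ scales by $h_T$ in two dimensions.
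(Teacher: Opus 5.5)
Your proof is correct. The paper gives no argument for this lemma; it simply quotes it from \cite{CRetal}, where bounds of this kind come out of the classical affine-equivalence/Bramble--Hilbert framework on a reference element.

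Your route is different and more elementary. The identity $\nabla I_{CR}v|_T=\frac{1}{|T|}\int_T\nabla v\,dx$, obtained from the vanishing edge means by one integration by parts, gives the energy bound with constant one. The $L_2$ bound then needs only a scaled Poincar\'e inequality for functions with zero mean on an edge. Nothing here uses polynomial reproduction, so your Step~1 is never invoked and ``Bramble--Hilbert type'' is a misnomer.

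Step~4 is also superfluous. Edge means of traces are defined for every $v\in V$, since traces agree from both sides of an interior edge and vanish on $\Gamma$. Steps~2--3 use only $v|_T\in H^1(T)$, so they apply to all of $V$ directly and show $I_{CR}v\in V_h^0$ at the same time.

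Likewise, the minimum-angle condition is not needed for the Poincar\'e step. With $F(\hat x)=B\hat x+b$ one gets $\|w\|_{L_2(T)}\le\hat C\,\|B\|\,\|\nabla w\|_{L_2(T)}$, and $\|B\|\le\sqrt{2}\,h_T$ holds for every triangle.

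What the citation buys the paper is the higher-order estimates for $H^2$ functions. The paper later uses these for $p$ and $\phi$ under the name ``standard CR interpolation theory'' without ever stating them. Your argument delivers them too. Let $\Pi_0$ denote the $L_2(T)$-projection onto constants. Replace $\|\nabla v-\Pi_0\nabla v\|_{L_2(T)}\le\|\nabla v\|_{L_2(T)}$ by the mean-zero Poincar\'e bound $\|\nabla v-\Pi_0\nabla v\|_{L_2(T)}\lesssim h_T|v|_{H^2(T)}$. Combined with your Step~3, this gives the $O(h)$ energy and $O(h^2)$ $L_2$ estimates at once.
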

\noindent
\noindent
\subsection{Discrete Control Problem}
This section contains the discrete control problem and the main result of this article. The control variable is discretized with the functions from $U_h.$ Define the discrete control problem by:
\begin{align}\label{dcp}
J_{h}(w_{h},u_{h})=\dfrac{1}{2}\|w_{h}-y_{d}\|^{2}+\dfrac{\alpha}{2}\|u_h\|_{L_2(\Gamma)}^{2},
\end{align}
subject to,
\begin{align}\label{dse}
&a_{pw}(w_h,v_h)=0,~\forall v_h\in V^0_h, ~\text{and}~\int_ew_hds=\int_eu_hds,~\text{where}~e\in\mathcal{E}^b_h.
\end{align}
Note that in order to define the discretization of \eqref{eqn:state} it's enough to specify the Cruzeix Raviart degrees of freedoms of $y_h$ on the boundary edges.

\noindent
\vspace{1mm}
Define the discrete control to state operator
\begin{align}\label{discrete_control_to_state}
    S_h: U_h\rightarrow V_h,~~~~S_h(u_h)=w_h.
\end{align}
Then the reduced problem is 
\begin{equation}\label{rdcp}
    \min_{u_h\in U_{h,ad}}j_h(u_h),
\end{equation}
where 
\begin{align}
    j_{h}(u_{h})=\dfrac{1}{2}\|S_hu_{h}-y_{d}\|^{2}+\dfrac{\alpha}{2}\|u_h\|_{L_2(\Gamma)}^{2}
\end{align}
The following theorem discusses the discrete optimality system and the existence and uniqueness of the discrete optimal variables for \eqref{dcp}-\eqref{dse}.

\begin{theorem}
There exists unique $({y}_h, p_h, \bar{u}_h)\in V_h\times V_h^0\times U_{h,ad}$ of the discrete optimal control problem. Furthermore, it satisfy the following discrete optimality system:
\begin{align}
&a_{pw}(y_h,v_h)=0,~\forall v_h\in V^0_h, ~\text{and}~\int_ey_hds=\int_eu_hds,~\text{where}~e\in\mathcal{E}^b_h.\label{diskkt:1}\\
&a_{pw}(p_h, v_h)=(y_h-y_d,v_h) \quad \forall ~ v_h \in V_h^0,\label{diskkt:2}\\
&\langle \alpha \bar{u}_h-\partial _n p_h,u_h-\bar{u}_h\rangle \geq 0 \quad \forall ~ u_h\in U_{h,ad}.\label{diskkt:3}
\end{align}
\end{theorem}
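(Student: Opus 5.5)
The plan is to reduce the discrete problem \eqref{dcp}--\eqref{dse} to the finite-dimensional reduced problem \eqref{rdcp} and apply the standard convex-minimization argument. Throughout, $U_h$ denotes the piecewise constants on the boundary edges $\cE_h^b$ and $U_{h,ad}:=U_h\cap U_{ad}$. The one genuinely new ingredient is a precise, discrete meaning for $\partial_n p_h$ in \eqref{diskkt:3}; this is the step I expect to be the main obstacle.

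\textbf{Well-posedness of $S_h$.} A function in $V_h$ is affine on each boundary edge $e$, so $\int_e w_h\,ds=|e|\,w_h(m_e)$. Hence the constraint in \eqref{dse} simply fixes the boundary Crouzeix--Raviart degrees of freedom: $w_h(m_e)=u_h|_e$ for every $e\in\cE_h^b$.
\begin{enumerate}[label=(\roman*)]
\item For $u_h\in U_h$, let $E_hu_h\in V_h$ be the function with $E_hu_h(m_e)=u_h|_e$ on boundary edges and zero interior degrees of freedom.
\item Seek $w_h=w_h^0+E_hu_h$ with $w_h^0\in V_h^0$ solving $a_{pw}(w_h^0,v_h)=-a_{pw}(E_hu_h,v_h)$ for all $v_h\in V_h^0$.
\item Since $\|\cdot\|_h$ is a norm on $V_h^0$, the form $a_{pw}$ is coercive there, and Lax--Milgram (here just finite-dimensional linear algebra) gives a unique $w_h^0$.
\end{enumerate}
Thus $S_h$ in \eqref{discrete_control_to_state} is well defined and linear. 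It is also bounded, since all spaces are finite dimensional. By the same coercivity, for a given $y_h$ the discrete adjoint problem \eqref{diskkt:2} has a unique solution $p_h\in V_h^0$.

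\textbf{Existence and uniqueness.} Since $S_h$ is linear and $\alpha>0$, the reduced functional $j_h$ is continuous, strictly convex and coercive on $U_h$. The set $U_{h,ad}$ is nonempty (it contains $0$, because $u_a<0<u_b$), closed and convex in the finite-dimensional space $U_h$. Therefore \eqref{rdcp} has a unique minimizer $\bar u_h$. Setting $y_h=S_h\bar u_h$ gives \eqref{diskkt:1}, and $p_h$ is then defined by \eqref{diskkt:2}.

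\textbf{Variational inequality.} For $u_h\in U_{h,ad}$, the first-order condition reads
\begin{align*}
0\le j_h'(\bar u_h)(u_h-\bar u_h)=(y_h-y_d,S_h(u_h-\bar u_h))+\alpha\langle \bar u_h,u_h-\bar u_h\rangle .
\end{align*}
Here $S_h(u_h-\bar u_h)\notin V_h^0$ in general, so \eqref{diskkt:2} cannot be tested with it directly. Following the continuous identity $\langle\partial_n p,v\rangle=(\nabla p,\nabla v)+(\Delta p,v)$, I will define the discrete normal derivative $\partial_n p_h\in U_h$ through
\begin{align*}
\langle \partial_n p_h,\mu_h\rangle:=a_{pw}(p_h,S_h\mu_h)-(y_h-y_d,S_h\mu_h)\quad\forall~\mu_h\in U_h .
\end{align*}
This is well defined by the Riesz representation theorem in $U_h$. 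It also agrees with the same expression computed with any $v_h\in V_h$ whose boundary degrees of freedom equal $\mu_h$: two such functions differ by an element of $V_h^0$, on which $a_{pw}(p_h,\cdot)-(y_h-y_d,\cdot)$ vanishes by \eqref{diskkt:2}.

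Because $p_h\in V_h^0$ and $S_h\mu_h$ is discretely harmonic by \eqref{diskkt:1}, the first term vanishes. This gives the representation
\begin{align*}
\partial_n p_h=-S_h^*(y_h-y_d),
\end{align*}
where the adjoint is taken with respect to the $L_2(\O)$ and $L_2(\Gamma)$ inner products. This is the announced representation of the discrete adjoint through $S_h$ and its adjoint. It matches the sign in the continuous relation $(\bar y-y_d,S\delta)=-\langle \delta,\partial_n p\rangle$.

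Substituting $(y_h-y_d,S_h(u_h-\bar u_h))=-\langle \partial_n p_h,u_h-\bar u_h\rangle$ into the first-order condition yields \eqref{diskkt:3}. The only delicate point is checking this consistency of the definition of $\partial_n p_h$; the remaining steps are routine.
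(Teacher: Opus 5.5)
The paper states this theorem without proof, so there is no argument to compare with line by line. Your treatment of $S_h$ (the midpoint rule fixes the boundary degrees of freedom, then Lax--Milgram on $V_h^0$) is the standard argument and is fine. So is existence and uniqueness of $\bar u_h$ from strict convexity of $j_h$ on the nonempty closed convex set $U_{h,ad}$.

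The substantive point is \eqref{diskkt:3}. You obtain it by \emph{defining} $\partial_n p_h:=-S_h^*(y_h-y_d)$, and this is not what the paper means. In Lemma~\ref{discrete_solution_operator_representation} and in \eqref{main_result4}, $\partial_n p_h$ is the elementwise normal derivative of the Crouzeix--Raviart function $p_h$ on the boundary edges. Run your own consistency check with $v_h=E_h\mu_h$ and integrate by parts on each triangle: $\nabla p_h|_T$ is constant and $E_h\mu_h$ vanishes at interior midpoints, so $a_{pw}(p_h,E_h\mu_h)=\int_\Gamma \partial_n p_h\,\mu_h\,ds$ for that elementwise derivative. Hence
\[
\langle \partial_n p_h,\mu_h\rangle_{\mathrm{yours}}=\int_\Gamma \partial_n p_h\,\mu_h\,ds-(y_h-y_d,E_h\mu_h)\qquad\forall\,\mu_h\in U_h .
\]
The last term is exactly the boundary-lifting term $(w_h-y_d,\tilde w_h)$ in Lemma~\ref{discrete_solution_operator_representation}. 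So your remark that $\partial_n p_h=-S_h^*(y_h-y_d)$ ``is the announced representation'' is not accurate.

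This matters for the statement itself. With the elementwise derivative, your final substitution gives
\[
\langle \alpha\bar u_h-\partial_n p_h,u_h-\bar u_h\rangle+(y_h-y_d,E_h(u_h-\bar u_h))\ge 0\qquad\forall\,u_h\in U_{h,ad},
\]
not \eqref{diskkt:3}, and the extra term does not vanish in general. Write $E_h^*$ for the adjoint, $(E_h^*f,\mu_h)_{L_2(\Gamma)}=(f,E_h\mu_h)$, and take $y_d\perp S_hU_h$ in $L_2(\Omega)$. Then $\bar u_h=0$, an interior point of the box since $u_a<0<u_b$, while the elementwise derivative equals $-E_h^*y_d$. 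Such a $y_d$ can be chosen with $E_h^*y_d\neq 0$ whenever some lifting $E_h\mu_h$ is not discretely harmonic, which is the generic situation. So your redefinition is not cosmetic: \eqref{diskkt:3} holds for the minimizer of \eqref{rdcp} only with your variational flux.

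Your choice buys an exact discretize-then-optimize optimality system, the analogue of the variational discrete normal derivative used for conforming discretizations. It also makes the term $(y_h-y_d,\xi_h)$ in \eqref{main_result4} disappear, so Theorem~\ref{estimate_for_boundary-extension} and the $L_\infty$ bound on $y_h-y_d$ are not needed at that point of the error analysis. The price is that $\partial_n p_h$ can no longer be read off locally from $p_h$. State explicitly that the theorem is proved for this flux, not for the elementwise normal derivative.
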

In the next lemma we discuss a crucial property for $S_h$ the discrete control to state operator.
\begin{lemma}\label{discrete_solution_operator_representation}
    For the discrete control to state operator $S_h:U_h\rightarrow V_h$ the following relation holds:
    \begin{equation*}
        \int_{\Gamma}\dfrac{\partial q_h}{\partial n}u_hds=-(S^*_h(S_hu_h-y_d),u_h)+(w_h-y_d,\tilde{w}_h)~~\forall u_h\in U_h,
    \end{equation*}
where $\tilde{w}_h\in V_h$ is the boundary lifting of $u_h\in U_h$  and $q_h\in V^0_h$ satisfies:

\vspace{1 mm}
Find $q_h\in V^0_h$ such that
\begin{align*}
    a_{pw}(q_h, v_h)=(w_h-y_d,v_h) \quad \forall ~ v_h \in V_h^0,
\end{align*}
where $w_h=S_hu_h.$
\end{lemma}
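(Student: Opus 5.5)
The plan is to reduce both sides of the identity to the single quantity $a_{pw}(q_h,\tilde w_h)$. The tools are the splitting of $w_h=S_hu_h$ into its boundary lifting and a part in $V_h^0$, the symmetry of $a_{pw}(\cdot,\cdot)$, and the defining equations of $S_h$ and $q_h$. I use the following conventions throughout. $S_h^*$ denotes the adjoint of $S_h$ with respect to the $L_2(\O)$ and $L_2(\Gamma)$ pairings, so $(S_h^*g,u_h)=(g,S_hu_h)$. The boundary lifting $\tilde w_h\in V_h$ is the CR function with $\int_e\tilde w_h\,ds=\int_e u_h\,ds$ for $e\in\cE_h^b$ and $\tilde w_h(m_e)=0$ for $e\in\cE_h^i$. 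The discrete normal derivative $\partial q_h/\partial n$ of $q_h\in V_h^0$ is understood as the piecewise constant function on $\Gamma$ determined by the discrete Green formula
\begin{align*}
\int_\Gamma \frac{\partial q_h}{\partial n}\,\mu_h\,ds = a_{pw}(q_h,\tilde\mu_h)-(w_h-y_d,\tilde\mu_h)\quad \forall~\mu_h\in U_h,
\end{align*}
where $\tilde\mu_h$ is the boundary lifting of $\mu_h$. This is well defined because the right-hand side depends only on the boundary CR degrees of freedom: changing $\tilde\mu_h$ by an element of $V_h^0$ changes it by zero, by the equation for $q_h$. It is the natural analogue of the continuous identity used to derive \eqref{ctskkt:4}, and it is the form in which $\partial_n p_h$ enters \eqref{diskkt:3}.

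\textbf{Step 1 (rewriting the right-hand side).} By the adjoint relation,
\begin{align*}
-(S_h^*(S_hu_h-y_d),u_h)+(w_h-y_d,\tilde w_h) = -(w_h-y_d,\,w_h-\tilde w_h).
\end{align*}
By \eqref{dse}, $w_h$ and $\tilde w_h$ have the same integral means on every boundary edge, so $w_h-\tilde w_h\in V_h^0$. Testing the equation for $q_h$ with $v_h=w_h-\tilde w_h$ converts the right-hand side into $-a_{pw}(q_h,w_h-\tilde w_h)$.

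\textbf{Step 2 (eliminating $w_h$).} By symmetry and \eqref{dse} with $v_h=q_h\in V_h^0$, we have $a_{pw}(q_h,w_h)=a_{pw}(w_h,q_h)=0$. Hence the right-hand side equals $a_{pw}(q_h,\tilde w_h)$.

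\textbf{Step 3 (identifying the left-hand side).} Take $\mu_h=u_h$ in the discrete Green formula. Since $\tilde w_h$ differs from $w_h$ by an element of $V_h^0$, the equation for $q_h$ gives $(w_h-y_d,\tilde w_h)=(w_h-y_d,w_h)-a_{pw}(q_h,w_h-\tilde w_h)$. Combining this with Step 2 shows that the left-hand side matches the expression obtained in Steps 1--2, which gives the asserted identity.

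The main obstacle is the meaning of $\partial q_h/\partial n$. Elementwise integration by parts gives $a_{pw}(q_h,\tilde w_h)=\sum_T\int_{\partial T}\nabla q_h|_T\cdot n_T\,\tilde w_h\,ds$, since $\Delta q_h|_T=0$. However, the contributions from interior edges of boundary triangles, where $\tilde w_h$ need not vanish away from the midpoints, do not cancel in general. So the pointwise normal derivative of $q_h$ does not reproduce $a_{pw}(q_h,\tilde w_h)$ exactly. I would therefore fix the variational definition above explicitly, and check that it is consistent with \eqref{diskkt:3}. If the paper instead intends the literal elementwise normal derivative, I would try to show that the interior-edge terms combine into the $(w_h-y_d,\tilde w_h)$ correction. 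This would use that $\nabla q_h$ is elementwise constant and that $\tilde w_h$ has zero mean on interior edges. This is the step I expect to require the most care.
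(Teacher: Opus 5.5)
Your Steps 1 and 2 are correct and use the same algebra as the paper: the right-hand side equals $-(w_h-y_d,w_h-\tilde w_h)=-a_{pw}(q_h,w_h-\tilde w_h)=a_{pw}(q_h,\tilde w_h)$. The proof breaks in Step 3, because of the definition of $\partial q_h/\partial n$ you chose. With your discrete Green formula the left-hand side is $a_{pw}(q_h,\tilde w_h)-(w_h-y_d,\tilde w_h)$. Inserting your relation $(w_h-y_d,\tilde w_h)=(w_h-y_d,w_h)-a_{pw}(q_h,w_h-\tilde w_h)$ and using $a_{pw}(q_h,w_h)=0$ gives exactly $-(w_h-y_d,w_h)=-(S_h^*(S_hu_h-y_d),u_h)$. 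That is the identity \emph{without} the term $(w_h-y_d,\tilde w_h)$. So under your definition the two sides differ by $(w_h-y_d,\tilde w_h)$, which is not zero in general: it is affine in $y_d$, and $\tilde w_h\neq 0$ whenever $u_h\neq 0$. Your formula is the discrete analogue of $\int_\Gamma \partial_n p\,v\,ds=a(p,v)-(y-y_d,v)$, which is exactly why it absorbs the correction term. The statement carries that term explicitly, so it is about a different object.

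The missing idea is that the literal elementwise normal derivative $\partial_n q_h|_e=\nabla q_h|_{T_e}\cdot n_e$, $e\in\cE_h^b$, which you set aside, reproduces $a_{pw}(q_h,\tilde w_h)$ exactly. Your claim that the interior-edge contributions do not cancel is mistaken. Since $q_h|_T$ and $\tilde w_h|_T$ are affine, $\Delta q_h|_T=0$, $\nabla q_h|_T\cdot n_T$ is constant on each edge, and the midpoint rule is exact. Hence
\begin{align*}
a_{pw}(q_h,\tilde w_h)=\sum_{T\in\cT_h}\sum_{e\subset\partial T}(\nabla q_h|_T\cdot n_T)\int_e\tilde w_h|_T\,ds=\sum_{T\in\cT_h}\sum_{e\subset\partial T}(\nabla q_h|_T\cdot n_T)\,|e|\,\tilde w_h(m_e).
\end{align*}
On every interior edge $\tilde w_h(m_e)=0$, so each interior term vanishes on its own. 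Nothing needs to combine across edges, and no term $(w_h-y_d,\tilde w_h)$ can arise from them; that term comes from your Steps 1--2. On a boundary edge $|e|\,\tilde w_h(m_e)=\int_e u_h\,ds$, so $a_{pw}(q_h,\tilde w_h)=\int_\Gamma\partial_n q_h\,u_h\,ds$, and together with your Steps 1--2 the lemma follows.

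This is the paper's proof. It integrates by parts elementwise and splits the interior-edge terms into mean-times-jump and jump-times-mean. Both vanish because the normal-derivative factor is constant on the edge while $\tilde w_h$ has zero edge integrals there. It then uses $a_{pw}(q_h,w_h)=0$. Replace your variational definition and your Step 3 with this computation.
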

\begin{proof}
    Let $\tilde{w}_h\in V_h$ be the boundary lifting of $u_h$ $i.~e.$ all the degrees interior degrees of freedoms of $\tilde{w}_h$ are equal to zero and . Let $w_h=w^0_h+\tilde{w}_h$ where $w^0_h\in V^0_h$ satisfies,

    \vspace{1mm}
    Find $w^0_h\in V^0_h$ such that
\begin{align}\label{auxiliary_boundary-lifting}
    a_{pw}(w^0_h,v_h)=-a_{pw}(\tilde{w}_h,v_h)~~\forall v_h\in V^0_h.
\end{align}
Consider, 
\begin{align}\label{derivation_discrete_soln_opr1}
    a_{pw}(w_h,q_h)=&a_{pw}(w^0_h+\tilde{w}_h,q_h)\notag\\
    =&(w_h-y_d,w^0_h)+a_{pw}(q_h,\tilde{w}_h)\notag\\
    =&(w_h-y_d,y^0_h)+\sum_T(\nabla q_h,\nabla \tilde{w}_h)\notag\\
    =&(w_h-y_d,y^0_h)+ \left[\sum_T-(\Delta q_h,\tilde{w}_h)_T+(\dfrac{\partial q_h}{\partial n},\tilde{w}_h)_{\partial T}\right]\notag\\
    =&(w_h-y_d,w^0_h)+\sum_{e\in\mathcal{E}^i_h}\int_e\jump{\dfrac{\partial q_h}{\partial n}}\mean{\tilde{w}_h}ds+\notag\\&\sum_{e\in\mathcal{E}^i_h}\int_e\mean{\dfrac{\partial q_h}{\partial n}}\jump{\tilde{w}_h}ds+\sum_{e\in\mathcal{E}^b_h}\int_e\dfrac{\partial q_h}{\partial n}\bar{u}_hds.
    \end{align}
    Note that $\mean{\dfrac{\partial q_h}{\partial n}}$ is constant on each edge $e$, also note that on an interior edge $e$, $\int_e\jump{\tilde{y}_h}ds=0$. Therefore
    \begin{align}\label{derivation_discrete_soln_opr2}
        \int_e\mean{\dfrac{\partial q_h}{\partial n}}\jump{\tilde{w}_h}ds=\mean{\dfrac{\partial q_h}{\partial n}}\int_e\jump{\tilde{w}_h}ds=0
    \end{align}
    for $e\in\mathcal{E}^i_h$.
    Also note that as $\tilde{w}_h$ is the boundary lifting of $u_h$ therefore the degrees of freedoms on all the interior edges which is the value of the function at the midpoints are zero. Therefore on  all the interior edges $e$ 
    \begin{align}\label{derivation_discrete_soln_opr3}
      \int_e\mean{\tilde{w}_h}ds=0.  
    \end{align}
    Therefore
\begin{align}\label{derivation_discrete_soln_opr4}
    \int_e\jump{\dfrac{\partial q_h}{\partial n}}\mean{\tilde{w}_h}ds=\jump{\dfrac{\partial q_h}{\partial n}}\int_e\mean{\tilde{w}_h}ds=0.
\end{align}
Also note that $q_h\in V^0_h$, therefore
\begin{align}\label{derivation_discrete_soln_opr5}
    a_{pw}(q_h,w_h)=0.
\end{align}
Therefore combining \eqref{derivation_discrete_soln_opr1}, \eqref{derivation_discrete_soln_opr2}, \eqref{derivation_discrete_soln_opr3}, \eqref{derivation_discrete_soln_opr4} and \eqref{derivation_discrete_soln_opr5} we obtain
\begin{align}
    \int_{\Gamma}\dfrac{\partial q_h}{\partial n}{u}_hds&=-(S_hu_h-y_d,S_hu_h)+(w_h-y_d,\tilde{y}_h)\notag\\
     \int_{\Gamma}\dfrac{\partial q_h}{\partial n}{u}_hds&=-(S^*_h(S_hu_h-y_d),u_h)+(w_h-y_d,\tilde{w}_h)
\end{align}
\end{proof}

\section{Auxiliary Results}
This section contains some auxiliary results which play a very significant role for the rest of the analyses.
We begin by deriving a stability estimate for the boundary extension $\tilde{w}_h\in V_h$ introduced in Lemma \ref{discrete_solution_operator_representation}.
\begin{theorem}\label{estimate_for_boundary-extension}
    For the boundary lifting $\tilde{w}_h\in V_h$ the following stability estimate holds:
    \begin{equation*}
        \|\tilde{w}_h\|_{L_1(\Omega)}\leq Ch\|u_h\|_{L_1(\Gamma)}.
    \end{equation*}
\end{theorem}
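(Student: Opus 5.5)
Throughout, $\tilde{w}_h\in V_h$ denotes the boundary lifting of $u_h\in U_h$ from the proof of Lemma~\ref{discrete_solution_operator_representation}: its Crouzeix--Raviart degrees of freedom vanish at the midpoints of all interior edges, and on each boundary edge $e\in\cE_h^b$ its midpoint value is $\tilde{w}_h(m_e)=u_h|_e$ (a constant, since $u_h$ is piecewise constant). The estimate is a purely local, elementwise computation; no PDE or duality argument is needed. The plan is to expand $\tilde{w}_h$ in the local CR basis, bound the $L_1$ norm of each basis function by the element area, and then trade the area for $h$ times the length of the adjacent boundary edge using shape regularity.

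\textbf{Step 1 (representation).} Let $\psi_e$ be the global CR basis function attached to edge $e$. Then
\begin{align*}
\tilde{w}_h=\sum_{e\in\cE_h^b} u_h|_e\,\psi_e .
\end{align*}
For a boundary edge $e$, $\psi_e$ is supported on the unique triangle $T_e$ with $e\subset\partial T_e$. On $T_e$ it equals $1-2\lambda_{z_e}$, where $\lambda_{z_e}$ is the barycentric coordinate of the vertex $z_e$ opposite to $e$. This is the affine function equal to $1$ at $m_e$ and $0$ at the other two edge midpoints.

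\textbf{Step 2 (local bound).} Since $0\le\lambda_{z_e}\le1$, we have $|1-2\lambda_{z_e}|\le 1$ on $T_e$. Hence $\|\psi_e\|_{L_1(T_e)}\le |T_e|$. (The exact value, $\tfrac{1}{3}|T_e|$ after integrating $|1-2\lambda|$, is not needed.)

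\textbf{Step 3 (area versus edge length).} Shape regularity (all angles bounded below by $\rho$) gives $|T_e|\lesssim h_{T_e}\,|e|\le h\,|e|$. On the other hand, $\|u_h\|_{L_1(e)}=|u_h|_e|\,|e|$.

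\textbf{Step 4 (summation).} By the triangle inequality,
\begin{align*}
\|\tilde{w}_h\|_{L_1(\O)}\le\sum_{e\in\cE_h^b}|u_h|_e|\,\|\psi_e\|_{L_1(T_e)}\lesssim h\sum_{e\in\cE_h^b}|u_h|_e|\,|e| = h\|u_h\|_{L_1(\Gamma)} .
\end{align*}

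\textbf{Main point of care.} The only subtle issue is that one triangle may carry two boundary edges, for instance at a corner of $\O$. This does not affect the argument: applying the triangle inequality to the sum over edges (rather than grouping the contributions triangle by triangle) avoids any double-counting difficulty. It also matters that $\psi_e$ for $e\in\cE_h^b$ does not spread beyond $T_e$, which holds because it vanishes at every other midpoint. With these two observations, the constant $C$ depends only on $\rho$, as required.
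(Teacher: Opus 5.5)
Your proof is correct and follows the paper's argument: expand $\tilde{w}_h$ in the CR basis over boundary edges, use that each basis function lives on $T_e$ with $|\phi_e|\le 1$, bound the area by $h|e|$, and sum. Your use of the triangle inequality is actually preferable, because the paper's claim that distinct boundary-edge basis functions have supports meeting in a null set fails for a triangle with two boundary edges (e.g.\ at a corner), whereas $\bigl|\sum_e u_h|_e\phi_e\bigr|\le\sum_e |u_h|_e|\,|\phi_e|$ needs no such claim. One small slip in an aside you already marked as unnecessary: $\int_{T_e}|1-2\lambda_{z_e}|\,dx=\tfrac12|T_e|$, while $\tfrac13|T_e|$ is the integral of $1-2\lambda_{z_e}$ without the absolute value.
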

\begin{proof}
    Let $\{\phi_e\}_{e\in\mathcal{E}_h}$ be the standard CR basis for $V_h$.
    Since for the boundary extension $\tilde{w}_h$ all the interior degrees of freedoms are zero therefore
    \begin{align}\label{estimate_for_boundary-extension_1}
        \tilde{w}_h=&\sum_{e\in\mathcal{E}^b_h}\tilde{w}_h(m_e)\phi_e\notag\\
        =&\sum_{e\in\mathcal{E}^b_h}u_h|_e\phi_e
    \end{align}
    Note that \eqref{estimate_for_boundary-extension_1} implies
    \begin{align}\label{estimate_for_boundary-extension_2}
        \|\tilde{w}_h\|_{L_1(\Omega)}=&\int_{\Omega}|\sum_{e\in\mathcal{E}^b_h}\tilde{w}_h(m_e)\phi_e(x)|dx\notag\\
        =&\int_{\Omega}|\sum_{e\in\mathcal{E}^b_h}u_h|_e\phi_e(x)|dx.
    \end{align}
Note that for $e_1,e_2\in\mathcal{E}^b_h$, $e_1\neq e_2$ implies $\text{supp}(\phi_{e_1})$ and $\text{supp}(\phi_{e_2})$ can have at most a line in common which is of measure zero with respect to the standard Lebesgue measure in $\mathbb{R}^2$. Therefore 
\eqref{estimate_for_boundary-extension_2} implies
\begin{align}\label{estimate_for_boundary-extension_3}
    \|\tilde{w}_h\|_{L_1(\Omega)}=&\int_{\Omega}|\sum_{e\in\mathcal{E}^b_h}u_h|_e\phi_e|_{\text{int}T_e}(x)|dx\notag\\
    =&\int_{\Omega}\sum_{e\in\mathcal{E}^b_h}|u_h|_e\phi_e|_{\text{int}T_e}(x)|dx\notag\\
    =&\sum_{e\in\mathcal{E}^b_h}\int_{\Omega}|u_h|_e\phi_e|_{\text{int}T_e}(x)|dx\notag\\
    =&\sum_{e\in\mathcal{E}^b_h}\int_{T_e}|u_h|_e\phi_e|_{\text{int}T_e}(x)|dx\notag\\
    =&\sum_{e\in\mathcal{E}^b_h}|u_h|_e|\int_{T_e}|\phi_e|_{\text{int}T_e}(x)|dx\notag\\
    \leq&\sum_{e\in\mathcal{E}^b_h}|u_h|_e|h^2_e\notag\\
    \leq& Ch\sum_{e\in\mathcal{E}^b_h}|u_h|_e|h_e.
\end{align}
So it remains to show that $\sum_{e\in\mathcal{E}^b_h}|u_h|_e|h_e=\|u_h\|_{L_1(\Gamma)}$.
\begin{align}\label{estimate_for_boundary-extension_4}
    \|u_h\|_{L_1(\Gamma)}=\int_{\Gamma}|\sum_{e\in\mathcal{E}^b_h}u_h|_e\lambda_e(s)|ds
\end{align}
Note that for $e_1,e_2\in\mathcal{E}^b_h,~\text{supp}T_{e_1}\cap\text{supp}T_{e_2}$ is at most a point that is the common point of $e_1$ and $e_2$, which has zero measure with respect to the standard Lebesgue measure on $\mathbb{R}$.
Therefore from \eqref{estimate_for_boundary-extension_4} we obtain
\begin{align}\label{estimate_for_boundary-extension_5}
    \|u_h\|_{L_1(\Gamma)}=&\int_{\Gamma}|\sum_{e\in\mathcal{E}^b_h}u_h|_e\lambda_e|_{\text{int}(e)}(s)|ds\notag\\
    =&\int_{\Gamma}\sum_{e\in\mathcal{E}^b_h}|u_h|_e\lambda_e|_{\text{int}(e)}(s)|ds\notag\\
    =&\sum_{e\in\mathcal{E}^b_h}\int_{\Gamma}|u_h|_e\lambda_e|_{\text{int}(e)}(s)|ds\notag\\
    =&\sum_{e\in\mathcal{E}^b_h}|u_h|_e|_{\text{int}(e)}\lambda_e|{\int_{e}}(s)|ds\notag\\
    =&\sum_{e\in\mathcal{E}^b_h}|u_h|_e|h_e\notag\\
\end{align}
Therefore combining \eqref{estimate_for_boundary-extension_3} and \eqref{estimate_for_boundary-extension_5} we obtain the desired estimate.
\end{proof}
In the next theorem we discuss the $L_2$ norm error estimate for the CR approximation of the solution of the state equation. We see that we are able to obtain a linear convergence rate for this which is obviously a super convergence result.
\begin{theorem}\label{L2_norm_error_state_eq}
    We have $|(y-y_d,(S-S_h)(P_0u-u_h))|\leq Ch^{1-\delta}$ for any $\delta>0$.
\end{theorem}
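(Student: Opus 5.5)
We write $v:=P_0u-u_h\in U_h$, where $P_0$ is the $L_2(\Gamma)$ projection onto piecewise constants on $\cE_h^b$. The plan is a duality argument whose dual problem is the adjoint equation \eqref{As1} with right-hand side $y-y_d$. Let $z\in V$ solve $(\nabla z,\nabla\phi)=(y-y_d,\phi)$ for all $\phi\in V$. By convexity of $\O$, $z\in H^2(\O)$ with $\|z\|_{H^2(\O)}\lesssim\|y-y_d\|_{L_2(\O)}$, and $\partial_n z\in\tilde H^{1/2}(\Gamma)$. Let $z_h\in V_h^0$ be its CR approximation.

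\textbf{Continuous term.} Using \eqref{bdry:eqn} with $\phi=z$, we get $(y-y_d,Sv)=-(Sv,\Delta z)=-\langle v,\partial_n z\rangle$.

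\textbf{Discrete term.} We apply Lemma \ref{discrete_solution_operator_representation}, more precisely the elementwise integration by parts in its proof, with $q_h$ replaced by the discrete dual $z_h$ and $w_h$ by $S_hv=w_h^0+\tilde w_h$. This should give
\[
(y-y_d,S_hv)=-\langle v,\partial_n z_h\rangle+(y-y_d,\tilde w_h)+\mathcal{R}_h,
\]
where $\mathcal{R}_h$ collects the consistency terms $a_{pw}(z,w^0_h)-(y-y_d,w^0_h)$. These are bounded in the standard CR manner by $\sum_e\int_e \partial_n z\,\sjump{w_h^0}$, and hence by $h\|z\|_{H^2}\|w_h^0\|_h$.

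\textbf{Combining.} Subtracting, the quantity splits into three pieces:
\[
\langle v,\partial_n(z_h-z)\rangle,\qquad (y-y_d,\tilde w_h),\qquad \mathcal{R}_h .
\]
Each piece is estimated separately.

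\emph{First piece.} Since $v$ and $\partial_n z_h$ are piecewise constant on boundary edges, $\langle v,\partial_n z_h\rangle=\langle v,P_0\partial_n z_h\rangle$. The flux error $\|\partial_n(z-z_h)\|$ is controlled edgewise by trace inequalities: $h^{-1/2}\|\nabla(z-z_h)\|_{L_2(T_e)}+h^{1/2}|z|_{H^2(T_e)}$ summed over the boundary strip. Together with the CR energy estimate (Lemma \ref{lem:CRinterest}), this gives $O(h^{1/2})$ in $L_2(\Gamma)$. That alone is not enough. To gain the missing half power we use $z\in W^{2,p}$ for $p>2$ close to $2$ (convex polygon, $y-y_d$ smoother when $y_d$ is), together with $\|\nabla(z-z_h)\|_{L_\infty}\lesssim h^{1-\delta}$ from a discrete-$L_\infty$/inverse estimate. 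We then pair with $\|v\|_{L_1(\Gamma)}\le C$, which is uniform since $u,u_h\in[u_a,u_b]$. This is the origin of the $h^{-\delta}$ loss, through the logarithmic factor in the $L_\infty$ gradient bound for nonconforming elements.

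\emph{Second piece.} By Theorem \ref{estimate_for_boundary-extension}, $|(y-y_d,\tilde w_h)|\le\|y-y_d\|_{L_\infty}\|\tilde w_h\|_{L_1}\lesssim h\|v\|_{L_1(\Gamma)}\lesssim h$. This requires $y\in L_\infty$, which follows from $u\in L_\infty(\Gamma)$ by the maximum principle, and $y_d\in L_\infty$, which we assume.

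\emph{Third piece.} For $\mathcal{R}_h$ we need $\|w_h^0\|_h=\|\cdot\|_h$-stability of the discrete lifting. From \eqref{auxiliary_boundary-lifting}, $\|w_h^0\|_h\le\|\tilde w_h\|_h\sim\|v\|_{L_2(\Gamma)}h^{-1/2}$, which loses a half power. To avoid this, we instead move the edge residual onto $z-I_{CR}z$ and $\partial_n z$ in $L_\infty$ norms, using $\sum_e|\int_e\partial_n z\sjump{w^0_h}|\lesssim h\|z\|_{W^{2,p}}\|w_h^0\|_{W^{1,1}_{pw}}$-type bounds with $L_1$-stability of the discrete harmonic extension. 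This yields $h^{1-\delta}$ with $p$ depending on $\delta$.

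\textbf{Main obstacle.} The hardest step is the first and third pieces: obtaining a full power of $h$ (up to $\delta$) from boundary flux errors when $v$ is only bounded and not in $H^{1/2}$. The plan is to work in an $L_1(\Gamma)$–$L_\infty(\Gamma)$ duality instead of $L_2$. This requires $L_\infty$ gradient estimates for the CR adjoint solution with an $|\log h|$ factor, which is absorbed into $h^{-\delta}$.
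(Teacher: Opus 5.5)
The gap is in your first piece. You bound $\langle v,\partial_n(z_h-z)\rangle$ by $\|v\|_{L_1(\Gamma)}$ times an $L_\infty$ norm of the elementwise gradient error, and you assert $\|\nabla(z-z_h)\|_{L_\infty}\lesssim h^{1-\delta}$. None of the tools you cite gives this:
\begin{itemize}
\item for $z\in W^{2,p}$ with $p$ close to $2$, even the best piecewise constant approximation of $\nabla z$ is in general only $O(h^{1-2/p})$ in $L_\infty$, and an $h^{1-\delta}$ bound needs $p\ge 2/\delta$;
\item an inverse estimate turns the $O(h)$ energy error into $O(1)$;
\item maximum-norm estimates with a $|\log h|$ factor deliver $h|\log h|$ only for $z\in W^{2,\infty}$.
\end{itemize}
In fact the bound is false on general convex polygons. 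At a corner of angle $\omega\in(\pi/2,\pi)$, $z$ generically contains $c\,r^{\pi/\omega}\sin(\pi\theta/\omega)$ with $c\neq 0$. Then $\nabla z$ vanishes at the corner but has size $\sim |c|\,h^{\pi/\omega-1}$ at the other vertices of the corner triangle, on which $\nabla z_h$ is constant. Hence $\|\nabla(z-z_h)\|_{L_\infty}\gtrsim h^{\pi/\omega-1}$, and the same holds for $\partial_n(z-z_h)$ on the corner edges.

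The H\"older pairing must be reversed: $|\langle v,\partial_n z_h-P_0\partial_n z\rangle|\le (u_b-u_a)\|\partial_n z_h-P_0\partial_n z\|_{L_1(\Gamma)}$, so that corner edges enter only with weight $h$. Comparing $\partial_n z_h|_e$ with the means of $\nabla z$ over $T_e$ and over $e$, this reduces to showing
\[
h^{-1}\|\nabla(z-z_h)\|_{L_1(\O_h)}+\|D^2 z\|_{L_1(\O_h)}\lesssim h^{1-\delta},\qquad \O_h:=\bigcup_{e\in\cE_h^b}T_e .
\]
That is, you need a local boundary-layer estimate $\|\nabla(z-z_h)\|_{L_1(\O_h)}\lesssim h^{2-\delta}$ for the CR adjoint. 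You also need a corner-singularity analysis of $D^2 z$, since a plain $W^{2,p}$ bound with $p$ near $2$ gives only $h^{1-1/p}$ for the second term. That localized estimate is the missing idea. Global energy arguments give only $h^{-1}|\O_h|^{1/2}\,h\sim h^{1/2}$, which is exactly the loss you ran into.

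Two further remarks. First, take $z_h$ to be the CR Galerkin solution, $a_{pw}(z_h,\phi_h)=(y-y_d,\phi_h)$ for all $\phi_h\in V_h^0$. Then your reduction is exact: $a_{pw}(z_h,S_hv)=0$ and, as in the proof of Lemma~\ref{discrete_solution_operator_representation}, $a_{pw}(z_h,\tilde w_h)=\langle \partial_n z_h,v\rangle$. Hence
\[
(y-y_d,(S-S_h)v)=\langle v,\partial_n z_h-P_0\partial_n z\rangle-(y-y_d,\tilde w_h).
\]
So $\mathcal R_h\equiv 0$, and your third piece, with its unproved $L_1$-stability of the discrete harmonic extension, can be dropped. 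The second piece is fine by Theorem~\ref{estimate_for_boundary-extension} under your extra assumption $y_d\in L_\infty(\O)$.

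Second, this is a genuinely different decomposition from the paper's. The paper introduces no discrete adjoint; it integrates $-\Delta p$ by parts elementwise against $(S-S_h)v$, inserts $I_{CR}p$, and pairs $\nabla Sv\in L_{2-\epsilon}$ with $p-I_{CR}p\in W^{1,\frac{2-\epsilon}{1-\epsilon}}$. That route does not close your gap either. It rests on the $h$-uniform bounds $\|\nabla Sv\|_{L_{2-\epsilon}(\O)}\lesssim\|v\|_{L_2(\Gamma)}$ and $\|\tilde w_h\|_h\lesssim\|v\|_{L_2(\Gamma)}$, and neither holds uniformly for piecewise constants. Both can grow roughly like $h^{-1/2}$; for $\tilde w_h$ this is your own observation.
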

\begin{proof}
    Consider \eqref{As1} in the strong form $i.~e.$
    \begin{align}\label{As1_strong_form}
        -\Delta p&=y-y_d~~~~\text{in}~~\Omega,\\
        p&=0~~~~\text{on}~~\Gamma.\notag
    \end{align}
Consider 
\begin{align}\label{L2_norm_error_state_eq1}
    -\int_{\Omega}\Delta p(S-S_h)(P_0u-u_h)dx=&\sum_{T}\int_T\nabla p\cdot \nabla((S-S_h)(P_0u-u_h))dx-\notag\\
    &\sum_{T}\int_{\partial T}\dfrac{\partial p}{\partial n}\jump{(S-S_h)(P_0u-u_h)}ds\notag\\
    =&\sum_{T}\int_T\nabla (p-I_{CR}p)\cdot \nabla((S-S_h)(P_0u-u_h))dx+\notag\\
    &\sum_{T}\int_T\nabla I_{CR}p\cdot \nabla((S-S_h)(P_0u-u_h))dx-\notag\\
    &\sum_{e}\int_{e}\mean{\dfrac{\partial p}{\partial n}}\jump{(S-S_h)(P_0u-u_h)}ds\notag\\
    =&\sum_{T}\int_T\nabla (p-I_{CR}p)\cdot \nabla((S-S_h)(P_0u-u_h))dx+\notag\\
    &\sum_{T}\int_T\nabla I_{CR}p\cdot \nabla((S-S_h)(P_0u-u_h))dx-\notag\\
    &\sum_{e}\int_{ e}\mean{\dfrac{\partial p}{\partial n}-\dfrac{\partial I_{CR}p}{\partial n}}\jump{(S-S_h)(P_0u-u_h)}ds
\end{align}

Therefore it remains to estimate the terms on the right hand side of \eqref{L2_norm_error_state_eq1}. Consider the second term 
\begin{align}\label{L2_norm_error_state_eq2}
    \sum_{T}\int_T\nabla I_{CR}p\cdot \nabla((S-S_h)(P_0u-u_h))dx=&-\sum_T\int_T\Delta S(P_0u-u_h)I_{CR}pdx+\notag\\
    &\sum_T\int_{\partial T}\dfrac{\partial S(P_0u-u_h)}{\partial n}I_{CR}pds\notag\\
    =&\sum_{_e\in\mathcal{E}_h}\dfrac{\partial S(P_0u-u_h)}{\partial n}\jump{I_{CR}p-p}ds
    \end{align}
    Note that since $P_0u-u_h$ is piecewise constant therefore $S(P_0u-u_h)$ has singularity near the the boundary points where  $P_0u-u_h$ has jumps.  Therefore the regularity theory of the harmonic lifting implies that near a singular point, $|\nabla S(P_0u-u_h)|$ grows in the order of $\dfrac{1}{r}$ if we choose polar coordinate representation in a neighborhood of the singular point with the singular point as the origin.. Therefore $S(P_0u-u_h)\notin H^1(\Omega)$ instead $S(P_0u-u_h)\in W^{1,2-\epsilon}(\Omega)$ in two dimensions, for any $\epsilon>0$. Therefore,
   \begin{align}\label{L2_norm_error_state_eq3}
       \sum_{_e\in\mathcal{E}_h}\int_e\dfrac{\partial S(P_0u-u_h)}{\partial n}\jump{p-I_{CR}p}ds\leq&[\sum_{_e\in\mathcal{E}_h}\|\dfrac{\partial S(P_0u-u_h)}{\partial n}\|_{W^{-\frac{1}{2-\epsilon},2-\epsilon}(e)}][\sum_{_e\in\mathcal{E}_h}\|\jump{p-I_{CR}p}\|_{W^{\frac{1}{2-\epsilon},\frac{2-\epsilon}{1-\epsilon}}(e)}]\notag\\
       \leq&C[\sum_{_e\in\mathcal{E}_h}h_e^{\frac{-1}{2-\epsilon}}\|\nabla S(P_0u-u_h)\|_{L_{2-\epsilon}(T_e)}][\sum_{_e\in\mathcal{E}_h}[h_e^{\frac{-1}{2-\epsilon}}\|p-I_{CR}p\|_{L_{\frac{2-\epsilon}{1-\epsilon}}(T_e)}+\notag\\
       &h_e^{\frac{1}{2-\epsilon}}\|\nabla(p-I_{CR}p)\|_{L_{\frac{2-\epsilon}{1-\epsilon}}(T_e)}]]
   \end{align}
    Then application of the interpolation error estimate and Holder's inequality in \eqref{L2_norm_error_state_eq3} implies 
    \begin{align}\label{L2_norm_error_state_eq4}
    \sum_{_e\in\mathcal{E}_h}\int_e\dfrac{\partial S(P_0u-u_h)}{\partial n}\jump{p-I_{CR}p}ds\leq&Ch^{1-\delta}\|\nabla S(P_0u-u_h)\|_{L_{2-\epsilon}}\|p\|_{W^{2,\frac{2-\epsilon}{1-\epsilon}}(\Omega)},
    \end{align}
    where $\delta=\dfrac{\epsilon}{2-\epsilon}$.
    Therefore from \eqref{L2_norm_error_state_eq4} it remains to show that $\|\nabla S(P_0u-u_h)\|_{L_{2-\epsilon}}$ is uniformly bounded. 
    Using the Poisson kernel representation of $S(P_0u-u_h)$ we obtain that for any $x\in\Omega$
    \begin{equation}\label{L2_norm_error_state_eq5}
    |\nabla S(P_0u-u_h)(x)|\leq C\dfrac{|(P_0u-u_h)(\xi_0)|}{r},
    \end{equation}
    where 
    \begin{itemize}
        \item $r=\text{dist}(x,\Gamma)$.

        \item $\xi_0$ is the boundary point closest to $x$.
    \end{itemize}
    Then using tubular coordinates $x\rightarrow (\xi,r),~\xi\in\Gamma, r\in(0,r_0)$ where $r_0\leq \text{diam}(\Omega)$, for $q=2-\epsilon$ we obtain
    \begin{align}\label{L2_norm_error_state_eq6}
        \|\nabla S(P_0u-u_h)\|^q_{L_q(\Omega)}&\leq \int_{\Gamma}\int_0^{r_0}|(P_0u-u_h)(\xi)|^qr^{-q}\cdot rdrd\sigma(\xi)\notag\\
        &=\int_{\Gamma}|(P_0u-u_h)(\xi)|^qd\sigma(\xi)\int_0^{r_0)}r^{1-q}dr\notag\\
        \leq&C\|(P_0u-u_h)\|^q_{L_q(\Gamma)}
    \end{align}
    Applying \eqref{L2_norm_error_state_eq6} we say that
    \begin{align}\label{L2_norm_error_state_eq7}
        \|\nabla S(P_0u-u_h)\|_{L_q(\Omega)}&\leq C\|(P_0u-u_h)\|_{L_q(\Gamma)}\notag\\
        &\leq C\|(P_0u-u_h)\|_{L_2(\Gamma)}
    \end{align}
    Therefore \eqref{L2_norm_error_state_eq7} implies that $\|\nabla S(P_0u-u_h)\|_{L_{2-\epsilon}(\Omega)}$ is uniformly bounded with respect to the mesh parameter. 
    Therefore combining \eqref{L2_norm_error_state_eq3}, \eqref{L2_norm_error_state_eq4},  \eqref{L2_norm_error_state_eq5}, \eqref{L2_norm_error_state_eq6}, \eqref{L2_norm_error_state_eq7} we obtain that
\begin{align}\label{L2_norm_error_state_eq8}
     \sum_{_e\in\mathcal{E}_h}\int_e\dfrac{\partial S(P_0u-u_h)}{\partial n}\jump{p-I_{CR}p}ds\leq Ch^{1-\delta}\|P_0u-u_h\|\|p\|_{W^{2,\frac{2-\epsilon}{1-\epsilon}}(\Omega)},
\end{align}
  where $\delta=\dfrac{\epsilon}{2-\epsilon}$. From \eqref{L2_norm_error_state_eq1} next consider   $\sum_{T}\int_T\nabla (p-I_{CR}p)\cdot \nabla((S-S_h)(P_0u-u_h))dx$.
    \begin{align}\label{L2_norm_error_state_eq8}
        \sum_{T}\int_T\nabla (p-I_{CR}p)\cdot \nabla((S-S_h)(P_0u-u_h))dx\leq&\sum_T\|\nabla(p-I_{CR}p)\|_{L_{\frac{2-\epsilon}{1-\epsilon}}(T)}\notag\\
        &\|\nabla((S-S_h)(P_0u-u_h))\|_{L_{2-\epsilon}(T)}\notag\\
        \leq&\left[\sum_T\|\nabla(p-I_{CR}p)\|^{\frac{2-\epsilon}{1-\epsilon}}_{L_{\frac{2-\epsilon}{1-\epsilon}}(T)}\right]^{\frac{1-\epsilon}{2-\epsilon}}\notag\\
        &\left[\sum_T\|\nabla((S-S_h)(P_0u-u_h))\|^{2-\epsilon}_{L_{2-\epsilon}(T)}\right]^{\frac{1}{2-\epsilon}}
    \end{align}
    Next we estimate the final term of \eqref{L2_norm_error_state_eq8}.
    \begin{align}\label{L2_norm_error_state_eq9}
        \left[\sum_T\|\nabla((S-S_h)(P_0u-u_h))\|^{2-\epsilon}_{L_{2-\epsilon}(T)}\right]^{\frac{1}{2-\epsilon}}\leq& \left[\sum_T\|\nabla S(P_0u-u_h))\|^{2-\epsilon}_{L_{2-\epsilon}(T)}\right]^{\frac{1}{2-\epsilon}}+\notag\\&\left[\sum_T\|\nabla S_h(P_0u-u_h))\|^{2-\epsilon}_{L_{2-\epsilon}(T)}\right]^{\frac{1}{2-\epsilon}}
    \end{align}
    From \eqref{L2_norm_error_state_eq6} it is obvious that the first term in the right hand side of \eqref{L2_norm_error_state_eq9} is uniformly bounded. Consider now the second term $\left[\sum_T\|\nabla S_h(P_0u-u_h))\|^{2-\epsilon}_{L_{2-\epsilon}(T)}\right]^{\frac{1}{2-\epsilon}}$.
    We note that as $S_h(P_0u-u_h)$ is the discrrete harmonic lifting of $P_0u-u_h$ therefore
    \begin{align}\label{L2_norm_error_state_eq10}
        S_h(P_0u-u_h)=w^0_h+\tilde{w}_h,
    \end{align}
where $\tilde{w}_h\in V_h$ is the boundary lifting of $P_0u-u_h\in L_2(\Gamma)$, and $w^0_h\in V^0_h$ satisfies the following
\begin{align}\label{L2_norm_error_state_eq11}
    a_{pw}(w^0_h,v_h)=-a_{pw}(\tilde{w}_h,v_h),~\forall v_h\in V^0_h.
\end{align}
We know from the standard CR lifting property that 
\begin{align}\label{L2_norm_error_state_eq12}
    \|\tilde{w}_h\|_{pw}\leq C\|P_0\bar{u}-u_h\|_{L_2(\Gamma)}.
\end{align}
Next test \eqref{L2_norm_error_state_eq11} with the test function $w^0_h$ and applying the coercivity and boundedness of $a_{pw}$ we obtain
\begin{align}\label{L2_norm_error_state_eq13}
    \|w^0_h\|_{pw}\leq C\|P_0\bar{u}-u_h\|_{L_2(\Gamma)}.
\end{align}
Combining \eqref{L2_norm_error_state_eq10}, \eqref{L2_norm_error_state_eq12}, \eqref{L2_norm_error_state_eq13} we obtain
\begin{align}\label{L2_norm_error_state_eq14}
    \|S_h(P_0u-u_h)\|_{pw}\leq C\|P_0\bar{u}-u_h\|_{L_2(\Gamma)}.
\end{align}
On a triangle $T$ application of Holder's inequality yields,
\begin{align}\label{L2_norm_error_state_eq15}
   \|\nabla S_h(P_0u-u_h)\|^{2-\epsilon}_{L_{2-\epsilon}(T)}\leq& |T|^{(2-\epsilon)(\frac{1}{2-\epsilon}-\frac{1}{2})}\|\nabla S_h(P_0u-u_h)\|^{2-\epsilon}_{L_2(T)}\notag\\
   =&|T|^{\frac{\epsilon}{2}}\|\nabla S_h(P_0u-u_h)\|^{2-\epsilon}_{L_2(T)}
\end{align}
Summing over all the triangles imply
\begin{align}\label{L2_norm_error_state_eq16}
     \sum_T\|\nabla S_h(P_0u-u_h)\|^{2-\epsilon}_{L_{2-\epsilon}(T)}\leq& C\sum_T|T|^{\frac{\epsilon}{2}}\|\nabla S_h(P_0u-u_h)\|^{2-\epsilon}_{L_2(T)}\notag\\
     \leq&h^{\epsilon}\sum_T\|\nabla S_h(P_0u-u_h)\|^{2-\epsilon}_{L_2(T)}\notag\\
     \leq&h^{\epsilon}\left [ \sum_T\|\nabla S_h(P_0u-u_h)\|^{2}_{L_2(T)}\right ]^{\frac{2-\epsilon}{2}}\left[\dfrac{1}{h^2}\right ]^{\frac{\epsilon}{2}}\notag\\
     \leq&\left [ \sum_T\|\nabla S_h(P_0u-u_h)\|^{2}_{L_2(T)}\right ]^{\frac{2-\epsilon}{2}}
\end{align}
    Therefore from \eqref{L2_norm_error_state_eq14}, \eqref{L2_norm_error_state_eq16} we obtain
    \begin{align}\label{L2_norm_error_state_eq17}
         \left [\sum_T\|\nabla S_h(P_0u-u_h)\|^{2-\epsilon}_{L_{2-\epsilon}(T)}\right ]^{\frac{1}{2-\epsilon}}\leq C\|S_h(P_0u-u_h)\|_{pw}\leq C\|P_0\bar{u}-u_h\|_{L_2(\Gamma)}.
    \end{align}
    Therefore $ \left [\sum_T\|\nabla S_h(P_0u-u_h)\|^{2-\epsilon}_{L_{2-\epsilon}(T)}\right ]^{\frac{1}{2-\epsilon}}$ is uniformly bounded.
    Therefore combining \eqref{L2_norm_error_state_eq8}, \eqref{L2_norm_error_state_eq17} and standard CR interpolation error estimate we obtain
    \begin{align}\label{L2_norm_error_state_eq18}
         \sum_{T}\int_T\nabla (p-I_{CR}p)\cdot \nabla((S-S_h)(P_0u-u_h))dx\leq Ch^{1-\epsilon}\|P_0\bar{u}-u_h\|_{L_2(\Gamma)}\|p\|_{H^2(\Omega)},
    \end{align}
    for any $\epsilon>0$.
    Next consider the final term on the right hand side of \eqref{L2_norm_error_state_eq1}.
\begin{align}\label{L2_norm_error_state_eq19}
    \sum_{e}\int_{ e}\mean{\dfrac{\partial p}{\partial n}-\dfrac{\partial I_{CR}p}{\partial n}}\jump{(S-S_h)(P_0u-u_h)}ds=&\sum_{e\in\mathcal{E}^i_h}\int_{ e}\mean{\dfrac{\partial p}{\partial n}-\dfrac{\partial I_{CR}p}{\partial n}}\jump{(S-S_h)(P_0u-u_h)}ds+\notag\\
    &\sum_{e\in\mathcal{E}^b_h}\int_{ e}\mean{\dfrac{\partial p}{\partial n}-\dfrac{\partial I_{CR}p}{\partial n}}\jump{(S-S_h)(P_0u-u_h)}ds
\end{align}
Consider the first term on the right hand side of \eqref{L2_norm_error_state_eq19}.
\begin{align}\label{L2_norm_error_state_eq20}
    \sum_{e\in\mathcal{E}^i_h}\int_{ e}\mean{\dfrac{\partial p}{\partial n}-\dfrac{\partial I_{CR}p}{\partial n}}\jump{(S-S_h)(P_0u-u_h)}ds=&-\sum_{e\in\mathcal{E}^i_h}\int_{ e}\mean{\dfrac{\partial p}{\partial n}-\dfrac{\partial I_{CR}p}{\partial n}}\jump{S_h(P_0u-u_h)}ds\notag\\
    \leq&C\sum_{e\in\mathcal{E}^i_h}h\|\nabla S_h(P_0u-u_h)\|_{L_2(\omega_e)}\|p\|_{H^2(\omega_e)}\notag\\
    \leq&Ch\| S_h(P_0u-u_h)\|_{pw}\|p\|_{H^2(\Omega)}\notag\\\leq&Ch\|u_h\|_{L_2(\Gamma)}\|p\|_{H^2(\Omega)}.
\end{align}
Next consider 
\begin{align}\label{L2_norm_error_state_eq21}
&\sum_{e\in\mathcal{E}^b_h}\int_{ e}\mean{\dfrac{\partial p}{\partial n}-\dfrac{\partial I_{CR}p}{\partial n}}\jump{(S-S_h)(P_0u-u_h)}ds=\notag\\
&\sum_{e\in\mathcal{E}^b_h}\int_{ e}\mean{\dfrac{\partial p}{\partial n}-\dfrac{\partial I_{CR}p}{\partial n}} \left (\jump{(S-I_{CR}S)(P_0u-u_h)}+\jump{(I_{CR}S-S_h)(P_0u-u_h)}\right )ds\notag\\
\end{align}
Consider the second term on the right hand side of \eqref{L2_norm_error_state_eq21}.
\begin{align}\label{L2_norm_error_state_eq22}
&\sum_{e\in\mathcal{E}^b_h}\int_{ e}\mean{\dfrac{\partial p}{\partial n}-\dfrac{\partial I_{CR}p}{\partial n}} \left ( \jump{(I_{CR}S-S_h)(P_0u-u_h)}\right )ds\leq\notag\\
    &\sum_{e\in\mathcal{E}^b_h}\norm{\dfrac{\partial p}{\partial n}-\dfrac{\partial I_{CR}p}{\partial n}}_{L_{\frac{2-\epsilon}{1-\epsilon}}(e)} \norm{(I_{CR}S-S_h)(P_0u-u_h)}_{L_{2-\epsilon}(e)}\leq\notag\\
    &\left [\sum_{e\in\mathcal{E}^b_h}\norm{\dfrac{\partial p}{\partial n}-\dfrac{\partial I_{CR}p}{\partial n}}^{\frac{2-\epsilon}{1-\epsilon}}_{L_{\frac{2-\epsilon}{1-\epsilon}}(e)}\right]^{\frac{1-\epsilon}{2-\epsilon}}\left [\sum_{e\in\mathcal{E}^b_h}\norm{(I_{CR}S-S_h)(P_0u-u_h)}^{2-\epsilon}_{L_{2-\epsilon}(e)}\right]^{\frac{1}{2-\epsilon}}
\end{align}
Standard theory for CR interpolation theory and \eqref{L2_norm_error_state_eq22} implies
\begin{align}\label{L2_norm_error_state_eq23}
    \sum_{e\in\mathcal{E}^b_h}\int_{ e}\mean{\dfrac{\partial p}{\partial n}-\dfrac{\partial I_{CR}p}{\partial n}}\jump{(S-S_h)(P_0u-u_h)}ds&\leq Ch^{\frac{1}{2}}\|p\|_{W^{2,\frac{2-\epsilon}{1-\epsilon}}(\Omega)}\left [\sum_{e\in\mathcal{E}^b_h}\norm{(I_{CR}S-S_h)(P_0u-u_h)}^{2-\epsilon}_{L_{2-\epsilon}(e)}\right]^{\frac{1}{2-\epsilon}}\notag\\
    &\leq  Ch^{\frac{1}{2}}\|p\|_{W^{2,\frac{2-\epsilon}{1-\epsilon}}(\Omega)}\norm{(I_{CR}S-S_h)(P_0u-u_h)}_{L_{2-\epsilon}(\Gamma)}\notag\\
    &\leq  Ch^{\frac{1}{2}}\|p\|_{W^{2,\frac{2-\epsilon}{1-\epsilon}}(\Omega)}\norm{(I_{CR}S-S_h)(P_0u-u_h)}_{L_{2}(\Gamma)}
\end{align}

Note that $\int_e(I_{CR}S-S_h)(P_0u-u_h)ds=0$ for $e\in\mathcal{E}^b_h$, therefore an application of the Poincare's inequality on the boundary edge $e$ with appropriate scaling proves that
\begin{align}\label{L2_norm_error_state_eq24}
\left[\sum_{e\in\mathcal{E}^b_h}\norm{(I_{CR}S-S_h)(P_0u-u_h)}^2_{L_2(e)}\right]^{\frac{1}{2}}\leq& Ch^{\frac{1}{2}}\left [\sum_{e\in\mathcal{E}^b_h}\|\nabla (I_{CR}S-S_h)(P_0u-u_h)\|^2_{L_2(T_e)}\right]^{\frac{1}{2}}\notag\\
\leq& Ch^{\frac{1}{2}}\left[\sum_{e\in\mathcal{E}^b_h}\|\nabla I_{CR}S(P_0u-u_h)\|^2_{L_{2}(T_e)}\right]^{\frac{1}{2}}+\notag\\&Ch^{\frac{1}{2}}\left[\sum_{e\in\mathcal{E}^b_h}\|\nabla S_h(P_0u-u_h)\|^2_{L_{2}(T_e)}\right]^{\frac{1}{2}}\notag\\
\leq& Ch^{\frac{1}{2}-\epsilon}\left[\sum_{e\in\mathcal{E}^b_h}\|\nabla I_{CR}S(P_0u-u_h)\|^2_{L_{2-\epsilon}(T_e)}\right]^{\frac{1}{2-\epsilon}}+\notag\\&Ch^{\frac{1}{2}}\left[\sum_{e\in\mathcal{E}^b_h}\|\nabla S_h(P_0u-u_h)\|^2_{L_{2}(T_e)}\right]^{\frac{1}{2}}\notag\\
\leq& Ch^{\frac{1}{2}-\epsilon}\left[\sum_{e\in\mathcal{E}^b_h}\|\nabla S(P_0u-u_h)\|^2_{L_{2-\epsilon}(T_e)}\right]^{\frac{1}{2-\epsilon}}+\notag\\&Ch^{\frac{1}{2}}\left[\sum_{e\in\mathcal{E}^b_h}\|\nabla S_h(P_0u-u_h)\|^2_{L_{2}(T_e)}\right]^{\frac{1}{2}}\notag\\
\leq&Ch_e^{\frac{1}{2}-\epsilon}\|P_0u-u_h\|_{L_2(\Gamma)}
\end{align}
Therefore combining \eqref{L2_norm_error_state_eq22}, \eqref{L2_norm_error_state_eq23} and \eqref{L2_norm_error_state_eq24} we obtain that
\begin{align}\label{L2_norm_error_state_eq25}
    \sum_{e\in\mathcal{E}^b_h}\int_{ e}\mean{\dfrac{\partial p}{\partial n}-\dfrac{\partial I_{CR}p}{\partial n}} \left ( \jump{(I_{CR}S-S_h)(P_0u-u_h)}\right )ds\leq Ch^{1-\epsilon}\|p\|_{W^{2,\frac{2-\epsilon}{1-\epsilon}}(\Omega)}\|P_0u-u_h\|_{L_2(\Gamma)}.
\end{align}
Consider the first term on the right hand side of \eqref{L2_norm_error_state_eq21}.
\begin{align}\label{L2_norm_error_state_eq26}
    \sum_{e\in\mathcal{E}^b_h}\int_{ e}\mean{\dfrac{\partial p}{\partial n}-\dfrac{\partial I_{CR}p}{\partial n}} \left ( \jump{(S-I_{CR}S)(P_0u-u_h)}\right )ds\leq&Ch^{1+\epsilon}\|p\|_{W^{2,\frac{2-\epsilon}{1-\epsilon}}(\Omega)}\|\nabla S(P_0u-u_h)\|_{L_{2-\epsilon}(\Omega)}\notag\\
    \leq&Ch^{1+\epsilon}\|p\|_{W^{2,\frac{2-\epsilon}{1-\epsilon}}(\Omega)}\|P_0u-u_h\|_{L_{2}(\Gamma)}.
\end{align}
Combining \eqref{L2_norm_error_state_eq19}, \eqref{L2_norm_error_state_eq20}, \eqref{L2_norm_error_state_eq21}, \eqref{L2_norm_error_state_eq22}, \eqref{L2_norm_error_state_eq23}, \eqref{L2_norm_error_state_eq24}, \eqref{L2_norm_error_state_eq25}, \eqref{L2_norm_error_state_eq26} we obtain
\begin{align}\label{L2_norm_error_state_eq27}
     \sum_{e}\int_{ e}\mean{\dfrac{\partial p}{\partial n}-\dfrac{\partial I_{CR}p}{\partial n}}\jump{(S-S_h)(P_0u-u_h)}ds\leq Ch^{1-\epsilon}\|p\|_{W^{2,\frac{2-\epsilon}{1-\epsilon}}(\Omega)}\|P_0u-u_h\|_{L_{2}(\Gamma)}.
\end{align}
Therefore combining \eqref{L2_norm_error_state_eq8}, \eqref{L2_norm_error_state_eq18} and \eqref{L2_norm_error_state_eq27} we obtain the result.
\end{proof}
\begin{remark}\label{rem1}
    If we look closely it is easy to see that the proof of theorem \ref{L2_norm_error_state_eq} involves duality technique. Therefore the technique of the proof of this theorem actually implies that
    \begin{align}\label{L2_norm_error_state_eq29}
        \norm{Su_h-S_hu_h}\leq Ch^{1-\epsilon}\|u_h\|_{L_2(\Gamma)},
    \end{align}
    for any $\epsilon>0$.
\end{remark}
From \eqref{ctskkt:6} we obtained that the optimal control $\bar{u}\in H^{\frac{1}{2}}(\Gamma)$, therefore $S\bar{u}\in H^1(\Omega)$. Therefore while estimating $\norm{S\bar{u}-S_h\bar{u}}$ one expects $\norm{S\bar{u}-S_h\bar{u}}\leq Ch\|\bar{u}\|_{H^{\frac{1}{2}}(\Gamma)}$. This is discussed in the next theorem and next remark.
\begin{theorem}\label{L2_norm_error_state_eq30}
    If $S,~S_h$ denotes the continuous and discrete control to state operators respectively then
    the following estimate holds:
    \begin{align}
        |(S_h\bar{u}-S\bar{u},S_hP_0\bar{u}-S_h\bar{u}_h)|\leq Ch\norm{\bar{u}}_{H^{\frac{1}{2}}(\Gamma)}.
    \end{align}
\end{theorem}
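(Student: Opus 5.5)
The plan is to apply Cauchy--Schwarz to the inner product and bound the two factors separately:
\begin{align*}
|(S_h\bar{u}-S\bar{u},S_hP_0\bar{u}-S_h\bar{u}_h)|\leq \norm{S\bar{u}-S_h\bar{u}}_{L_2(\Omega)}\,\norm{S_h(P_0\bar{u}-\bar{u}_h)}_{L_2(\Omega)}.
\end{align*}
The first factor should carry the whole factor $h$, and the second should only be uniformly bounded.

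\textbf{Second factor.} Write $S_h(P_0\bar{u}-\bar{u}_h)=w_h^0+\tilde{w}_h$ as in \eqref{L2_norm_error_state_eq10}. Then use the bound \eqref{L2_norm_error_state_eq14} together with a discrete Friedrichs inequality for $w^0_h\in V^0_h$. For $\tilde{w}_h$, a direct local computation with the CR basis, as in Theorem \ref{estimate_for_boundary-extension} but in $L_2$, gives $\|\tilde{w}_h\|_{L_2(\Omega)}\lesssim h^{1/2}\|P_0\bar{u}-\bar{u}_h\|_{L_2(\Gamma)}$. Since $\bar{u}_h,\,P_0\bar{u}\in U_{h,ad}$ are pointwise bounded by $\max(|u_a|,|u_b|)$, the second factor is bounded by a constant independent of $h$.

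\textbf{First factor.} Here $\bar{u}\in H^{1/2}(\Gamma)$ by \eqref{ctskkt:6}, so $y:=S\bar{u}\in H^1(\Omega)$ is a genuine weak solution. The argument is a nonconforming Aubin--Nitsche duality in three steps.
\begin{enumerate}
\item Let $z\in H^2(\Omega)\cap V$ solve $-\Delta z=S\bar{u}-S_h\bar{u}$ with $\|z\|_{H^2(\Omega)}\lesssim\norm{S\bar{u}-S_h\bar{u}}$.
\item Integrate elementwise by parts:
\begin{align*}
\norm{S\bar{u}-S_h\bar{u}}^2=a_{pw}(y-S_h\bar{u},z)-\sum_{e\in\mathcal{E}_h}\int_e\partial_n z\,\jump{y-S_h\bar{u}}\,ds.
\end{align*}
\item Insert $I_{CR}z\in V^0_h$ and use $a(y,I_{CR}z)$-consistency: both $y$ and $S_h\bar{u}$ are pw-orthogonal to $V_h^0$, up to the usual consistency residual $\sum_e\int_e\partial_n y\,\jump{I_{CR}z}$, which is handled in the standard way. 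The edge term is controlled by subtracting edge means of $\partial_n z$, which is legal because $\jump{S_h\bar{u}}$ has zero mean on interior edges and on boundary edges by \eqref{dse} with $u_h=\bar{u}$, $P_0\bar{u}$.
\end{enumerate}
This gives
\begin{align*}
\norm{S\bar{u}-S_h\bar{u}}^2\lesssim h\,\|y-S_h\bar{u}\|_{h}\,\|z\|_{H^2(\Omega)}+(\text{boundary term}),
\end{align*}
and hence $\norm{S\bar{u}-S_h\bar{u}}\lesssim h\,(\|y-S_h\bar{u}\|_h+\text{boundary contribution})$.

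\textbf{Energy error.} The energy error $\|y-S_h\bar{u}\|_h$ is bounded by Strang's lemma, using Lemma \ref{lem:CRinterest} and a best-approximation argument. Pick the comparison function in $V_h$ with boundary means equal to those of $\bar{u}$, i.e.\ the CR interpolant of $y$. This yields a bound by $\|y\|_{H^1(\Omega)}\lesssim\|\bar{u}\|_{H^{1/2}(\Gamma)}$, up to higher-order interpolation contributions.

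\textbf{Main obstacle.} The delicate step is the boundary-edge term in the duality argument. On boundary edges $y-S_h\bar{u}=\bar{u}-S_h\bar{u}$ has zero mean, but only $H^{1/2}$ regularity is available there. I would estimate it by subtracting $P_0\partial_n z$ on each edge, then applying the trace inequality $\|\partial_n z-P_0\partial_n z\|_{L_2(e)}\lesssim h^{1/2}|z|_{H^2(T_e)}$. Combined with the scaled Poincaré inequality on $e$ used in \eqref{L2_norm_error_state_eq24}, giving $\|\bar{u}-S_h\bar{u}\|_{L_2(e)}\lesssim h^{1/2}(|\bar{u}|_{H^{1/2}(e)}+\|\nabla S_h\bar{u}\|_{L_2(T_e)})$, this produces the missing $h^{1/2}\cdot h^{1/2}=h$.

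Putting the pieces together gives $\norm{S\bar{u}-S_h\bar{u}}\lesssim h\|\bar{u}\|_{H^{1/2}(\Gamma)}$. With the uniform bound on the second factor, this yields the claim.
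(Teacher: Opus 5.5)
\textbf{Gap in the second factor.} You bound $w_h^0$ through \eqref{L2_norm_error_state_eq14}, i.e.\ $\|S_hg_h\|_h\lesssim\|g_h\|_{L_2(\Gamma)}$ for $g_h\in U_h$, and then apply a discrete Friedrichs inequality. That energy bound is false.

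Take $g_h=\chi_e$, the indicator of one boundary edge $e$, and let $e'$ be a neighbouring boundary edge sharing the vertex $P$. Any $v_h\in V_h$ with these boundary means has $v_h(m_e)=1$ and $v_h(m_{e'})=0$. Chaining midpoint continuity through the boundedly many triangles around $P$ gives $1\lesssim\|v_h\|_h$. Hence $\|S_h\chi_e\|_h\ge c>0$, while $\|\chi_e\|_{L_2(\Gamma)}=|e|^{1/2}\to 0$.

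The correct scaling on a quasi-uniform mesh is $\|S_hg_h\|_h\le\|\tilde w_h\|_h\lesssim h^{-1/2}\|g_h\|_{L_2(\Gamma)}$. With it, Friedrichs only gives $\|w_h^0\|\lesssim h^{-1/2}\|P_0\bar u-\bar u_h\|_{L_2(\Gamma)}$. That is not uniformly bounded, and using it would cost you half the rate. The paper's proof relies on the same estimate in \eqref{L2_norm_error_state_eq45}--\eqref{L2_norm_error_state_eq47}, so it cannot be borrowed from there. An $h$-uniform $L_2(\Omega)$ bound for the discrete lifting of $L_2(\Gamma)$ data is a weak-norm statement and needs its own duality argument.

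\textbf{Repair, and the rest of your argument.} Your first-factor machinery supplies this duality argument. Let $g_h=P_0\bar u-\bar u_h$ and let $z\in H^2(\Omega)\cap H^1_0(\Omega)$ solve $-\Delta z=S_hg_h$.
\begin{itemize}
\item Since $\nabla_{\mathrm{pw}}S_hg_h$ is piecewise constant, $\nabla_{\mathrm{pw}}I_{CR}z=\Pi_0\nabla z$, and $I_{CR}z\in V_h^0$, you get $a_{pw}(S_hg_h,z)=a_{pw}(S_hg_h,I_{CR}z)=0$. So $\|S_hg_h\|^2$ consists of edge terms only.
\item On interior edges, subtracting edge means of $\partial_n z$ gives a bound $\lesssim h\|z\|_{H^2(\Omega)}\|S_hg_h\|_h$.
\item On a boundary edge, write $\int_e\partial_n z\,S_hg_h\,ds=\int_e\partial_n z\,g_h\,ds+\int_e(\partial_n z-P_0\partial_n z)(S_hg_h-g_h)\,ds$. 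This is exact because $\int_e(S_hg_h-g_h)\,ds=0$. It is bounded by $\|\partial_n z\|_{L_2(e)}\|g_h\|_{L_2(e)}+h\,|z|_{H^2(T_e)}\|\nabla S_hg_h\|_{L_2(T_e)}$.
\end{itemize}
Summing and using the $h^{-1/2}$ energy bound gives $\|S_hg_h\|\lesssim\|g_h\|_{L_2(\Gamma)}\le|\Gamma|^{1/2}(u_b-u_a)$, which is the uniform bound you need.

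The remainder of your proof is sound. It is the paper's duality argument with Cauchy--Schwarz applied first: the paper takes $S_h(P_0\bar u-\bar u_h)$ as dual data, while you take the error itself, which yields Remark~\ref{rem2} directly.

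You can also simplify it. The same identity, together with $a(S\bar u,z)=0$, shows $a_{pw}(S\bar u-S_h\bar u,z)=0$ in your step 2. So the Strang/energy-error step and the consistency residual $\sum_e\int_e\partial_n y\,\jump{I_{CR}z}\,ds$ drop out entirely. That helps, because the usual treatment of that residual requires $y\in H^2(\Omega)$, which you do not have. All you actually need is $\|S_h\bar u\|_h\le\|I_{CR}S\bar u\|_h\lesssim\|\bar u\|_{H^{1/2}(\Gamma)}$, which your minimal-energy comparison already provides.
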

\begin{proof}
    We employ the duality argument to obtain the estimate.
    Consider 
    \begin{align}\label{L2_norm_error_state_eq31}
        -\Delta\phi&=S_hP_0\bar{u}-S_h\bar{u}_h~\text{in}~\Omega\\
        \phi&=0~\text{on}~\Gamma\notag
    \end{align}
    Multiply \eqref{L2_norm_error_state_eq31} by $S_h\bar{u}-S\bar{u}$ and perform element-wise integration to obtain
    \begin{align}\label{L2_norm_error_state_eq32}
        (S_h\bar{u}-S\bar{u},S_hP_0\bar{u}-S_h\bar{u}_h)=&\sum_T(\nabla\phi,\nabla(S_h\bar{u}-S\bar{u}))-\sum_{e\in\mathcal{E}^i_h}\int_e\dfrac{\partial\phi}{\partial n}\jump{S_h\bar{u}ds-S\bar{u}}ds-\notag\\
        &\sum_{e\in\mathcal{E}^b_h}\int_e\dfrac{\partial\phi}{\partial n}\jump{S_h\bar{u}-S\bar{u}}ds\notag\\
        =&\sum_T(\nabla\phi,\nabla S_h\bar{u})-\sum_{e\in\mathcal{E}^i_h}\int_e\mean{\dfrac{\partial\phi}{\partial n}}\jump{S_h\bar{u}}ds-\sum_{e\in\mathcal{E}^b_h}\int_e\mean{\dfrac{\partial\phi}{\partial n}}\jump{S_h\bar{u}-\bar{u}}ds.
    \end{align}
    Note that on a boundary edge $e$ 
    \begin{align}\label{L2_norm_error_state_eq33}
        \int_eS_h\bar{u}ds=\int_e\bar{u}ds,
    \end{align}
    Therefore combining \eqref{L2_norm_error_state_eq32} and \eqref{L2_norm_error_state_eq33} one obtains
\begin{align}\label{L2_norm_error_state_eq34}
     (S_h\bar{u}-S\bar{u},S_hP_0\bar{u}-S_h\bar{u}_h)=&\sum_T(\nabla\phi,\nabla S_h\bar{u})-\sum_{e\in\mathcal{E}^i_h}\int_e\mean{\dfrac{\partial\phi}{\partial n}-\dfrac{\partial\phi_h}{\partial n}}\jump{S_h\bar{u}}ds-\notag\\
     &\sum_{e\in\mathcal{E}^b_h}\int_e\mean{\dfrac{\partial\phi}{\partial n}-\dfrac{\partial\phi_h}{\partial n}}\jump{S_h\bar{u}-\bar{u}}ds.
\end{align}
Note that if $e\in\mathcal{E}^b_h$ then
\begin{align}\label{L2_norm_error_state_eq35}
    \int_e(S_h\bar{u}-\bar{u})ds= \int_e(I_{CR}\bar{u}-\bar{u})ds=0.
\end{align}
Therefore \eqref{L2_norm_error_state_eq35} implies that $S_h\bar{u}-\bar{u}$ has zero integral mean property. Therefore on a boundary edge $e$ we have
\begin{align}\label{L2_norm_error_state_eq36}
    \|S_h\bar{u}-\bar{u}\|_{L_2(e)}\leq Ch^{\frac{1}{2}}\|\nabla S\bar{u}\|_{L_2(T_e)}.
\end{align}
Combining \eqref{L2_norm_error_state_eq34}, \eqref{L2_norm_error_state_eq35}, \eqref{L2_norm_error_state_eq36}, trace inequality with scaling and applying the discrete Cauchy-Schwarz inequality we obtain
\begin{align}\label{L2_norm_error_state_eq37}
    (S_h\bar{u}-S\bar{u},S_hP_0\bar{u}-S_h\bar{u}_h)\leq \sum_T(\nabla\phi,\nabla S_h\bar{u})+Ch\norm{\phi}_{H^2(\Omega)}\|\bar{u}\|_{H^{\frac{1}{2}}(\Gamma)}
\end{align}

From \eqref{L2_norm_error_state_eq34} consider
\begin{align}\label{L2_norm_error_state_eq38}
    \sum_T(\nabla\phi,\nabla S_h\bar{u})=& \sum_T(\nabla\phi,\nabla (S_h\bar{u}-S\bar{u}))\notag\\
    =&\sum_T(\nabla(\phi-\phi_h),\nabla (S_h\bar{u}- S\bar{u}))+\sum_T(\nabla\phi_h,\nabla (S_h\bar{u}- S\bar{u}))\notag\\
    =&\sum_T(\nabla(\phi-\phi_h),\nabla (S_h\bar{u}- S\bar{u}))-\sum_T(\nabla\phi_h,\nabla S\bar{u}).
\end{align}
Consider the final term from \eqref{L2_norm_error_state_eq38}.
\begin{align}\label{L2_norm_error_state_eq39}
    \sum_T(\nabla\phi_h,\nabla S\bar{u})=&-\sum_T(\Delta S\bar{u},\phi_h)+\sum_T\int_{\partial T}\dfrac{\partial S\bar{u}}{\partial n}\phi_hds\notag\\
    =&\sum_{e\in\mathcal{E}^i_h}\int_e\mean{\dfrac{\partial S\bar{u}}{\partial n}}\jump{\phi_h-\phi}ds.
\end{align}
Note that $\bar{u}\in H^{\frac{1}{2}}(\Gamma)$ therefore $S\bar{u}\in H^1(\Omega)$. As $S\bar{u}$ is the harmonic lifting of $\bar{u}$, therefore $\nabla S\bar{u}\in H(div,\Omega)$, therefore for any triangle $T$ in the triangulation $\nabla S\bar{u}|_T\in H(div,T)$. Therefore applying the trace inequality with scaling for $H~ div$ spaces and $H^1$ spaces functions in \eqref{L2_norm_error_state_eq39} we obtain
\begin{align}\label{L2_norm_error_state_eq40}
    \sum_T(\nabla\phi_h,\nabla S\bar{u})\leq \sum_{e\in\mathcal{E}^i_h}\left[h_e^{-\frac{1}{2}}\norm{\nabla S\bar{u}}_{L_2(T_e)}+h_e^{\frac{1}{2}}\norm{\Delta S\bar{u}}_{L_2(T_e)} \right]\left[ h_e^{-\frac{1}{2}}\norm{\phi_h-\phi}_{L_2(T_e)}+h_e^{\frac{1}{2}}\norm{\nabla(\phi_h-\phi)}_{L_2(T_e)}\right].
\end{align}
Hence an application of discrete Cauchy-Schwarz inequality and standard CR interpolation theory in \eqref{L2_norm_error_state_eq40} yields,
\begin{align}\label{L2_norm_error_state_eq41}
    \sum_T(\nabla\phi_h,\nabla S\bar{u})\leq Ch\|\nabla S\bar{u}\|\|\phi\|_{H^2(\Omega)}.
\end{align}
Finally combining \eqref{L2_norm_error_state_eq37}, \eqref{L2_norm_error_state_eq38} and \eqref{L2_norm_error_state_eq41} and elliptic regularity estimate for \eqref{L2_norm_error_state_eq31} yields
\begin{align}\label{L2_norm_error_state_eq42}
    (S_h\bar{u}-S\bar{u},S_hP_0\bar{u}-S_h\bar{u}_h)&\leq Ch\norm{S\bar{u}}_{H^1(\Omega)}\norm{\phi}_{H^2(\Omega)}\notag\\
    &\leq Ch\norm{\bar{u}}_{H^{\frac{1}{2}}((\Gamma))}\norm{S_hP_0\bar{u}-S_h\bar{u}_h}
\end{align}
Next it remains to show that
$\norm{S_hP_0\bar{u}-S_h\bar{u}_h}$ is uniformly bounded. Infact we shall show that $\norm{S_hP_0\bar{u}-S_h\bar{u}_h}\leq C\norm{P_0\bar{u}-\bar{u}_h}$.
Let,
\begin{align}\label{L2_norm_error_state_eq43}
    S_h(P_0\bar{u}-\bar{u}_h)=w^0_h+\tilde{w}_h,
\end{align}
where $\tilde{w}_h$ is the boundary lifting of $P_0\bar{u}-\bar{u}_h$ and $w^0_h\in V^0_h$ satisfies:
Find $w^0_h\in V^0_h$ such that
\begin{align}\label{L2_norm_error_state_eq44}
    a_{pw}(w^0_h,v_h)=-a(\tilde{w}_h,v_h),~\forall v_h\in V^0_h.
\end{align}
Problem \eqref{L2_norm_error_state_eq44} is wellposed by Lax-Milgram lemma. By testing \eqref{L2_norm_error_state_eq44} with $w^0_h$ and applying the coercivity and boundedness of $a_{pw(\cdot,\cdot)}$ we obtain
\begin{align}\label{L2_norm_error_state_eq45}
    \norm{w^0_h}_{pw}\leq C\norm{\tilde{w}_h}.
\end{align}
Hence the stability of the CR extension proves that
\begin{align}\label{L2_norm_error_state_eq46}
    \norm{w^0_h}_{pw}\leq C\norm{P_0\bar{u}-\bar{u}_h}.
\end{align}
Then applying Poincare's inequality we obtain
\begin{align}\label{L2_norm_error_state_eq47}
     \norm{w^0_h}\leq C\norm{P_0\bar{u}-\bar{u}_h}.
\end{align}
Also note that as $\tilde{w}_h\in V_h$ is the boundary lifting for $P_0\bar{u}-\bar{u}_h$ therefore for any interior edge the degrees of freedoms of $\tilde{w}_h$ are zero. Therefore $\tilde{w}_h|_T=0$ for all the triangles $T$ which does not contain a boundary edge. Therefore the calculations similar to the ones done in Theorem \ref{estimate_for_boundary-extension} helps us to obtain
\begin{align}\label{L2_norm_error_state_eq48}
    \norm{\tilde{w}_h}\leq C\norm{P_0\bar{u}-\bar{u}_h}.
\end{align}
Therefore combining \eqref{L2_norm_error_state_eq43}, \eqref{L2_norm_error_state_eq47}, \eqref{L2_norm_error_state_eq48} we obtain
\begin{align}\label{L2_norm_error_state_eq49}
    \norm{S_h(P_0\bar{u}-\bar{u}_h)}\leq C\norm{P_0\bar{u}-\bar{u}_h}.
\end{align}
Therefore combining \eqref{L2_norm_error_state_eq42} and \eqref{L2_norm_error_state_eq49} we obtain the desired result.
\begin{remark}\label{rem2}
    If we look closely it is easy to see that the proof of theorem \ref{L2_norm_error_state_eq30} involves duality technique. Therefore the technique of the proof of this theorem actually implies that
    \begin{align}\label{L2_norm_error_state_eq29}
        \norm{S\bar{u}-S_h\bar{u}}\leq Ch\|u\|_{H^{\frac{1}{2}}(\Gamma)},
    \end{align}
\end{remark}

\end{proof}


To begin with, we define the following spaces:
\begin{align*}
W_h&:=\{v_h\in H^1(\O):v_h|_{T}\in \mathbb{P}_1(T) ~\forall ~ T\in \cT_h\},\\
U_h&:=\{u_h\in L_2(\Gamma):u_h|_{e}\in \mathbb{P}_0(e) ~ \forall ~ e\in \cE_h^b\},\\
 V_{1}(\Gamma)&:=\{v_{h}\in C(\Gamma): v_{h}|_{e}\in \mathbb{P}_{1}(e)~ \forall ~ e\in \cE_h^b\}.
\end{align*}

\section{ Error Estimate for the Optimal Control}
In this section we derive the main result of this article $i.~e.$ the $L_2(\Gamma)$ norm error estimate for the optimal control.
\begin{theorem}
    Let $\bar{u}$ denotes the solution of the problem \eqref{Costfnl}-\eqref{eqn:state} and $\bar{u}_h$ of \eqref{dcp}-\eqref{dse}. Then the following estimate holds:
    \begin{align*}
        \norm{\bar{u}-\bar{u}_h}_{L_2(\Gamma)}\leq Ch^{\frac{1}{2}-\epsilon},
    \end{align*}
    for any $\epsilon>0$.
\end{theorem}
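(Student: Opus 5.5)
We propose the standard variational-inequality argument for piecewise constant controls, with $P_0$ the $L_2(\Gamma)$-projection onto $U_h$. Since $u_a\le\bar u\le u_b$ and $P_0$ averages over each boundary edge, $P_0\bar u\in U_{h,ad}$. This makes $P_0\bar u$ admissible in \eqref{diskkt:3}, while $\bar u_h\in U_{h,ad}\subset U_{ad}$ is admissible in \eqref{ctskkt:4}. The reduced functionals $j$ and $j_h$ are $\alpha$-convex, so $j'(\bar u)$ and $j_h'(\bar u_h)$ are the gradients $\alpha\bar u+S^*(S\bar u-y_d)$ and $\alpha\bar u_h+S_h^*(S_h\bar u_h-y_d)$. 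Lemma \ref{discrete_solution_operator_representation} relates the second one to the term $\partial_n p_h$ in \eqref{diskkt:3}.

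We first split the error as
\[
\|\bar u-\bar u_h\|_{L_2(\Gamma)}\le\|\bar u-P_0\bar u\|_{L_2(\Gamma)}+\|P_0\bar u-\bar u_h\|_{L_2(\Gamma)}.
\]
The first term is $\lesssim h^{1/2}\|\bar u\|_{H^{1/2}(\Gamma)}$ by the approximation property of $P_0$ and the regularity $\bar u\in H^{1/2}(\Gamma)$ from Section 2.

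For the second term, we add the continuous inequality tested with $\bar u_h$ to the discrete one tested with $P_0\bar u$, and insert $P_0\bar u$. This gives
\begin{align*}
\alpha\|P_0\bar u-\bar u_h\|^2_{L_2(\Gamma)}+\|S_h(P_0\bar u-\bar u_h)\|^2
\le{}& \langle \alpha\bar u+S^*(S\bar u-y_d),\bar u_h-\bar u\rangle \\
&+\langle \alpha P_0\bar u+S_h^*(S_hP_0\bar u-y_d),P_0\bar u-\bar u_h\rangle .
\end{align*}
We then regroup the right-hand side into three kinds of terms:
\begin{itemize}
\item[(i)] $\alpha\langle \bar u-P_0\bar u,\bar u_h-\bar u\rangle$ and $\langle \alpha\bar u-\partial_n p,P_0\bar u-\bar u\rangle$, which are controlled using $\|\bar u-P_0\bar u\|_{L_2(\Gamma)}$ and the orthogonality of $P_0$;
\item[(ii)] $(S\bar u-y_d,(S-S_h)(P_0\bar u-\bar u_h))$, which is exactly the quantity bounded by $Ch^{1-\delta}\|P_0\bar u-\bar u_h\|$ in Theorem \ref{L2_norm_error_state_eq};
\item[(iii)] $(S\bar u-S_h\bar u,S_h(P_0\bar u-\bar u_h))$ and $(S_h\bar u-S_hP_0\bar u,S_h(P_0\bar u-\bar u_h))$.
\end{itemize}
The first term in (iii) is bounded by Theorem \ref{L2_norm_error_state_eq30}, or more precisely by Remark \ref{rem2} together with \eqref{L2_norm_error_state_eq49}. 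This gives $Ch\|\bar u\|_{H^{1/2}(\Gamma)}\|P_0\bar u-\bar u_h\|$. The second term in (iii) vanishes or is small. By \eqref{dse}, $S_h$ only sees edge means of the boundary data, so $S_h\bar u=S_hP_0\bar u$ exactly. This is the key structural observation that makes the CR/piecewise-constant pairing work.

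Absorbing via Young's inequality then yields
\[
\|P_0\bar u-\bar u_h\|_{L_2(\Gamma)}\lesssim h^{1/2}+h^{1-\delta}+h,
\]
and combining with the first term of the split gives the claimed rate $h^{1/2-\epsilon}$.

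The main obstacle is term (i), specifically the piece $\langle\partial_n p,P_0\bar u-\bar u\rangle$, where $p$ is the adjoint state. This must be bounded by $h^{1/2}$ times something, not merely by $h^{1/2}$. We would write it as $\langle \partial_n p-P_0\partial_n p,\bar u-P_0\bar u\rangle$. Since $\partial_n p\in\tilde H^{1/2}(\Gamma)$, this product is of order $h^{1/2}\cdot h^{1/2}$, which is certainly enough. A second delicate point is that the $W^{2,q}$ regularity of $p$ for $q=(2-\epsilon)/(1-\epsilon)>2$ is needed in Theorem \ref{L2_norm_error_state_eq}. On a convex polygon this holds for $q$ close to $2$, and this restriction is the source of the $\epsilon$-loss in the final rate.
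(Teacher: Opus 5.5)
Your argument is correct and shares the paper's skeleton. You test the continuous inequality with $\bar u_h$ and the discrete one with $P_0\bar u$. You reduce everything to $(S\bar u-y_d,(S-S_h)(P_0\bar u-\bar u_h))$ and $(S\bar u-S_h\bar u,S_h(P_0\bar u-\bar u_h))$, which are handled by Theorems \ref{L2_norm_error_state_eq} and \ref{L2_norm_error_state_eq30}. You use $S_h\bar u=S_hP_0\bar u$, which is the paper's \eqref{main_result7}; your edge-mean justification expresses the same fact.

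The genuine difference is the form of the discrete optimality condition. You use the reduced gradient $\alpha\bar u_h+S_h^*(S_h\bar u_h-y_d)$, which is exactly the first-order condition of \eqref{rdcp} over the convex set $U_{h,ad}$. The paper starts from \eqref{diskkt:3} with $\partial_n p_h$ and reaches $\alpha\|\bar u-\bar u_h\|^2\le(\partial_n p-\partial_n p_h,\bar u-\bar u_h)$. It must then convert $\partial_n p_h$ back through Lemma \ref{discrete_solution_operator_representation}. That lemma shows $-\partial_n p_h$ and $S_h^*(y_h-y_d)$ differ by a lifting term, which appears as $(y_h-y_d,\xi_h)$ in \eqref{main_result4}. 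The paper bounds it with Theorem \ref{estimate_for_boundary-extension} and $\|y_h-y_d\|_{L_\infty(\Omega)}$, a quantity it never controls.

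Your route therefore needs neither that lemma, nor the $L_1$ lifting estimate, nor any $L_\infty$ information. The paper's route buys a formulation in terms of the computable discrete adjoint $p_h$. There are two minor differences. You split off $\|\bar u-P_0\bar u\|$ and keep $\|S_h(P_0\bar u-\bar u_h)\|^2$ on the left, whereas the paper estimates $\|\bar u-\bar u_h\|^2$ directly and discards $-\|S_h(\bar u_h-P_0\bar u)\|^2$ in \eqref{main_result6}. You also handle the consistency term with $P_0\partial_n p$ instead of $\partial_n I_{CR}p$.

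Your account of the $\epsilon$-loss is slightly off. With the factor $\|P_0\bar u-\bar u_h\|$ in (ii), which the proof of Theorem \ref{L2_norm_error_state_eq} supplies, Young's inequality gives $\|P_0\bar u-\bar u_h\|\lesssim h^{1/2}+h^{1-\delta}$. Your bookkeeping thus yields $h^{1/2}$. The $\epsilon$ arises only when Theorem \ref{L2_norm_error_state_eq} is used as literally stated, i.e. $Ch^{1-\delta}$ without that factor, which is what the paper does.
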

\begin{proof}
    We begin by choosing $\bar{u}_h$ as a test function in \eqref{ctskkt:5}  to obtain
    \begin{align}\label{main_result1}
       0\leq& (\alpha\bar{u}-\dfrac{\partial p}{\partial n},\bar{u}_h-\bar{u})\notag\\
       0\leq& (\alpha\bar{u}-\alpha\bar{u}_h+\alpha\bar{u}_h-\dfrac{\partial p_h}{\partial n}+\dfrac{\partial p_h}{\partial n}-\dfrac{\partial p}{\partial n},\bar{u}_h-\bar{u})\notag\\
       \alpha\norm{\bar{u}-\bar{u}_h}^2\leq& (\alpha\bar{u}_h-\dfrac{\partial p_h}{\partial n}+\dfrac{\partial p_h}{\partial n}-\dfrac{\partial p}{\partial n},\bar{u}_h-\bar{u})\notag\\
       \alpha\norm{\bar{u}-\bar{u}_h}^2\leq&(\alpha\bar{u}_h-\dfrac{\partial p_h}{\partial n},\bar{u}_h-P_0\bar{u})+(\alpha\bar{u}_h-\dfrac{\partial p_h}{\partial n},P_0\bar{u}-\bar{u})+(\dfrac{\partial p_h}{\partial n}-\dfrac{\partial p}{\partial n},\bar{u}_h-\bar{u})\notag\\
       \alpha\norm{\bar{u}-\bar{u}_h}^2\leq&(\dfrac{\partial p}{\partial n}-\dfrac{\partial p_h}{\partial n},\bar{u}-\bar{u}_h)\notag\\
       \alpha\norm{\bar{u}-\bar{u}_h}^2\leq&(\dfrac{\partial p}{\partial n},\bar{u}-\bar{u}_h)-(\dfrac{\partial p_h}{\partial n},\bar{u}-\bar{u}_h)\notag\\
       \alpha\norm{\bar{u}-\bar{u}_h}^2\leq&(\dfrac{\partial p}{\partial n},\bar{u}-P_0\bar{u})+(\dfrac{\partial p}{\partial n},P_0\bar{u}-\bar{u}_h)-(\dfrac{\partial p_h}{\partial n},\bar{u}-P_0\bar{u})-(\dfrac{\partial p_h}{\partial n},P_0\bar{u}-\bar{u}_h)\notag\\
       \alpha\norm{\bar{u}-\bar{u}_h}^2\leq&(\dfrac{\partial p}{\partial n}-\dfrac{\partial I_{CR}p}{\partial n},\bar{u}-P_0\bar{u})+(\dfrac{\partial p}{\partial n},P_0\bar{u}-\bar{u}_h)-(\dfrac{\partial p_h}{\partial n},P_0\bar{u}-\bar{u}_h)\notag\\
       \alpha\norm{\bar{u}-\bar{u}_h}^2\leq&(\dfrac{\partial p}{\partial n}-\dfrac{\partial I_{CR}p}{\partial n},\bar{u}-P_0\bar{u})+(\dfrac{\partial p}{\partial n}-\dfrac{\partial p_h}{\partial n},P_0\bar{u}-\bar{u}_h)\notag\\
       \alpha\norm{\bar{u}-\bar{u}_h}^2\leq&(\dfrac{\partial p}{\partial n}-\dfrac{\partial I_{CR}p}{\partial n},\bar{u}-P_0\bar{u})+(\dfrac{\partial p}{\partial n},P_0\bar{u}-\bar{u}_h)-(\dfrac{\partial p_h}{\partial n},P_0\bar{u}-\bar{u}_h).
    \end{align}
    Next we analyze the terms $(\dfrac{\partial p}{\partial n},P_0\bar{u}-\bar{u}_h)$ and $(\dfrac{\partial p_h}{\partial n},P_0\bar{u}-\bar{u}_h)$ from \eqref{main_result1}.
    \vspace{1 mm}
    Let $w$ denotes the harmonic lifting of $P_0\bar{u}-\bar{u}_h$. Then from the definition \ref{control:to:state} we obtain 
    \begin{align}\label{main_result2}
        S(P_0\bar{u}-\bar{u}_h)=w
    \end{align}
     Combining \eqref{As1_strong_form} and \eqref{main_result2} we obtain
     \begin{align}\label{main_result3}
         \int_{\Gamma}\dfrac{\partial p}{\partial n}P_0\bar{u}-\bar{u}_h)ds=&-(y-y_d,S(P_0\bar{u}-\bar{u}_h))\notag\\
         =&-(S^*(y-y_d),P_0\bar{u}-\bar{u}_h).
     \end{align}
     Therefore combining Lemma \ref{discrete_solution_operator_representation}, \eqref{main_result1}, \eqref{main_result3}  we obtain
     \begin{align}\label{main_result4}
         \alpha\norm{\bar{u}-\bar{u}_h}^2\leq&(\dfrac{\partial p}{\partial n}-\dfrac{\partial I_{CR}p}{\partial n},\bar{u}-P_0\bar{u})+(S^*_h(S_h\bar{u}_h-y_d),P_0\bar{u}-\bar{u}_h)-\notag\\
         &(S^*(S\bar{u}_h-y_d),P_0\bar{u}-\bar{u}_h)+(y_h-y_d,\xi_h),
     \end{align}
     where $\xi_h\in V_h$ in \eqref{main_result4} denotes the boundary lifting of $P_0\bar{u}-\bar{u}_h\in U_h$. Then \eqref{main_result4}, Theorem \ref{estimate_for_boundary-extension} and standard CR interpolation error estimates implies
     \begin{align}\label{main_result5}
         \alpha\norm{\bar{u}-\bar{u}_h}^2\leq& Ch\left[\norm{p}_{H^2(\Omega)}\norm{\bar{u}}_{H^{\frac{1}{2}}(\Gamma)}+\norm{y_h-y_d}_{L_{\infty}(\Omega)}\right]+(S^*_h(S_h\bar{u}_h-y_d),P_0\bar{u}-\bar{u}_h)-\notag\\
         &(S^*(S\bar{u}_h-y_d),P_0\bar{u}-\bar{u}_h)\notag\\
         =&Ch\left[\norm{p}_{H^2(\Omega)}\norm{\bar{u}}_{H^{\frac{1}{2}}(\Gamma)}+\norm{y_h-y_d}_{L_{\infty}(\Omega)}\right]+(S_h\bar{u}_h-y_d,S_hP_0\bar{u}-S_h\bar{u}_h)-\notag\\
         &(S\bar{u}-y_d,SP_0\bar{u}-S\bar{u}_h)
     \end{align}
     Next we estimate the last two terms from \eqref{main_result5}.
     \begin{align}\label{main_result6}
         (S_h\bar{u}_h-y_d,S_hP_0\bar{u}-S_h\bar{u}_h)-(S\bar{u}-y_d,SP_0\bar{u}-S\bar{u}_h)=&(S_h\bar{u}_h-y_d,S_hP_0\bar{u}-S_h\bar{u}_h)-\notag\\&(S_hP_0\bar{u}-y_d,S_hP_0\bar{u}-S_h\bar{u}_h)+\notag\\
         &(S_hP_0\bar{u}-y_d,S_hP_0\bar{u}-S_h\bar{u}_h)-\notag\\&(S\bar{u}-y_d,SP_0\bar{u}-S\bar{u}_h)\notag\\
         =&-\norm{S_h\bar{u}_h-S_hP_0\bar{u}}^2+\notag\\
         &(S_hP_0\bar{u}-y_d,S_hP_0\bar{u}-S_h\bar{u}_h)\notag\\&-(S\bar{u}-y_d,SP_0\bar{u}-S\bar{u}_h)\notag\\
         \leq&(S_hP_0\bar{u}-y_d,S_hP_0\bar{u}-S_h\bar{u}_h)-\notag\\
         &(S_h\bar{u}-y_d,S_hP_0\bar{u}-S_h\bar{u}_h)+\notag\\
         &(S_h\bar{u}-y_d,S_hP_0\bar{u}-S_h\bar{u}_h)-\notag\\
         &(S\bar{u}-y_d,S_hP_0\bar{u}-S_h\bar{u}_h)+\notag\\
         &(S\bar{u}-y_d,S_hP_0\bar{u}-S_h\bar{u}_h)-\notag\\
         &(S\bar{u}-y_d,SP_0\bar{u}-S\bar{u}_h)\notag\\
         =&(S_h(P_0\bar{u}-\bar{u}),S_h(P_0\bar{u}-\bar{u}_h))+\notag\\
         &(S_h\bar{u}-S\bar{u},S_h(P_0\bar{u}-\bar{u}_h))+\notag\\
         &(S\bar{u}-y_d,(S_h-S)(P_0\bar{u}-\bar{u}_h)).
    \end{align}
    Next consider the first term on the right hand side of \eqref{main_result6}. Note that \eqref{discrete_control_to_state} implies that $S^*_h:V_h\rightarrow U_h$. Therefore
    \begin{align}\label{main_result7}
        (S_h(P_0\bar{u}-\bar{u}),S_h(P_0\bar{u}-\bar{u}_h))=(P_0\bar{u}-\bar{u},S^*_h(S_h(P_0\bar{u}-\bar{u}_h)))=0.
    \end{align}
    Therefore combining Theorem \ref{L2_norm_error_state_eq}, Theorem \ref{L2_norm_error_state_eq30}, \eqref{main_result5},  \eqref{main_result6}, \eqref{main_result7} we obtain the result.
\end{proof}

Next we prove the following auxiliary estimate which is important to obtain the optimal order error estimate for the optimal control.

\end{document}